\tikzstyle directed=[postaction={decorate,decoration={markings,
    mark=at position #1 with {\arrow{>}}}}]
\newcommand{\hackcenter}[1]{
 \xy (0,0)*{#1}; \endxy}
\tikzset{->-/.style={decoration={
  markings,
  mark=at position #1 with {\arrow{>}}},postaction={decorate}}}
\tikzset{middlearrow/.style={
        decoration={markings,
            mark= at position 0.5 with {\arrow{#1}} ,
        },
        postaction={decorate}
    }
}
\def\u0{{\underline{0}}}
\def\la{\langle}
\def\ra{\rangle}
\newcommand{\sE}{\cal{E}}
\newcommand{\sF}{\cal{F}}
\newcommand{\onel}{\1_{\lambda}}
\def\Id{\mathrm{Id}}
\theoremstyle{plain}
\newtheorem{theorem}{Theorem}
\newtheorem{proposition}[theorem]{Proposition}
\theoremstyle{definition}
\newtheorem{definition}[theorem]{Definition}
\theoremstyle{definition}
\newtheorem{remark}[theorem]{Remark}
\numberwithin{equation}{section}
\numberwithin{theorem}{section}
\newcommand{\maps}{\colon}
\newcommand{\refequal}[1]{\xy {\ar@{=}^{#1}
(-1,0)*{};(1,0)*{}};
\endxy}
\newcommand{\Hom}{{\rm Hom}}
\renewcommand{\to}{\rightarrow}
\def\Id{\mathrm{Id}}
\def\mf{\mathfrak}
\numberwithin{equation}{section}
\let\hat=\widehat
\let\epsilon=\varepsilon
\def\Z{{\mathbbm Z}}
\def\cal#1{\mathcal{#1}}%
\def\1{\mathbbm{1}}%
\def\nn{\notag}
\def\la{\langle}
\def\ra{\rangle}
\renewcommand{\l}{\lambda}
\def\cal#1{\mathcal{#1}}
\newcommand\nc{\newcommand}
\nc\rnc{\renewcommand}
\nc\Kar{\operatorname{Kar}}
\nc\End{\operatorname{End}}
\nc\sfc{\textsf{c}}
\newcommand{\scs}{\scriptstyle}
\newcommand{\Ucat}{\cal{U}}
\nc\Sym{\operatorname{Sym}}
\title{Parameters in categorified quantum groups}
\begin{document}
\setcounter{tocdepth}{3}

%
%
\author{Aaron D. Lauda}
\email{lauda@usc.edu}
\address{Department of Mathematics\\ University of Southern California \\ Los Angeles, CA}


%
\maketitle

\begin{abstract}
In this note we give explicit isomorphisms of 2-categories between various versions of the categorified quantum group associated to a simply-laced Kac-Moody algebra.  These isomorphisms are convenient when working with the categorified quantum group.
They make it possible to translate results from the $\mf{gl}_n$ variant of the 2-category to the $\mf{sl}_n$ variant and transfer results between various conventions in the literature.  We also extend isomorphisms of finite type KLR algebras for different choices of parameters to the level of 2-categories.
\end{abstract}
\setcounter{tocdepth}{2}

\section{Introduction}

Brundan has shown that the categorified quantum group associated to $\mf{g}$ is essentially unique~\cite{Brundan2}.  There remain several choices to be made:
\begin{itemize}
  \item a choice of scalars $Q$ determining the KLR algebra $R_Q$ categorifying the positive half of $\dot{U}(\mf{g})$,
  \item overall scalings of generating 2-morphisms that affect the precise form of the relations.  These parameters include the values of degree zero bubbles  and the dependence of these choices on the weight $\l \in X$.
\end{itemize}
Different choices for the parameters can result in different behaviors of the resulting 2-categories.

Skew Howe duality has proven to be a powerful tool in higher representation theory and its applications to link homology~\cite{CKL-skew,Cautis,LQR,QR,MW}.  However, this technique is fundamentally a $\mf{gl}_n$ phenomenon, though many of the results in the literature express the results in terms of $\mf{sl}_n$.
Hence, one has to move between the categorified quantum groups for $\mf{gl}_n$ and $\mf{sl}_n$. This creates complications because in the current literature the categorifications of $\mf{sl}_n$ and $\mf{gl}_n$ are not connected in a straightforward manner.  The parameters in each case seem oddly related.  Further, many results  are formulated for some fixed choice of parameters in the categorified quantum group.  For example, in type $A$ the connection with Soergel bimodules relies on one choice of coefficients~\cite{MSV}, while the connection to cohomology rings of Grassmannians utilizes another~\cite{Lau1,Lau2,KL3}.
Translating results from one choice to another can be rather complicated since one must paste together and extend the needed rescaling 2-functors from partial results scattered throughout the literature, and some of these isomorphisms require weight dependent rescalings that depend on values of weights modulo 4.



In this note we define a general version of the categorified quantum group and give explicit 2-functors for translating results from one formulation to the other.  This allows for an immediate translation from $\mf{gl}_n$ coefficients to $\mf{sl}_n$ coefficients with an explicit rescaling 2-functor collecting various results from the literature. We also make explicit certain  isomorphisms that can be defined between KLR-algebras $R_Q$ and $R_{Q'}$ for different choices of scalars when the underlying graph of the simply-laced Kac-Moody algebra is a tree, in particular, a Dynkin diagram.  Here we show how to extend these isomorphisms to isomorphisms of 2-categories for the corresponding categorified quantum groups.  This map hasn't appeared in the literature before.   The author  found himself often reconstructing these results and decided to produce this note.

\subsection{Versions of categorified quantum groups}

In \cite{KL3} a 2-category $\cal{U}(\mf{g})$ was defined for any symmetrizable Kac-Moody algebra $\mf{g}$.  They conjectured that this 2-category
categorifies the integral form of Lusztig's idempotented version $\dot{U}_q(\mf{g})$ of the quantum group and proved the conjecture for $\mf{g}=\mf{sl}_n$, following earlier work for $\mf{g}=\mf{sl}_2$ from \cite{Lau1}.  Later Webster proved the conjecture in general~\cite{Web}.   An important feature of the 2-category $\cal{U}(\mf{g})$ is its diagrammatic nature, making it possible to represent 2-morphisms using a planar diagrammatic calculus.  However, this 2-category is only cyclic up to a sign, meaning that planar deformations of a diagram representing a 2-morphism only gives rise to scalar multiples of the same 2-morphism.

The KLR algebra governs the upward oriented strands in $\cal{U}(\mf{g})$.
In \cite{KL2,Rou2} more general coefficients $Q$ for KLR algebras were introduced and a more general non-cyclic 2-category $\cal{U}_Q(\mf{g})$ was defined in \cite{CLau} for an arbitrary KLR algebra $R_Q$.  Closely related categories were also studied in~\cite{Rou2}.
Composing biadjunction units and counits with other generators produces certain endomorphisms of the identity map in each weight $\l$ called \emph{dotted bubbles}.  With the appropriate number of dots, these dotted bubbles have degree zero and take some value called \emph{bubble parameters} in the ground field $\Bbbk^{\times}$:
\begin{equation} \label{eq:degreezero}
 \hackcenter{ \begin{tikzpicture} [scale=.8]
 \draw (-.15,.35) node { $\scs i$};
 \draw  (0,0) arc (180:360:0.5cm) [thick];
 \draw[,<-](1,0) arc (0:180:0.5cm) [thick];
\filldraw  [black] (.1,-.25) circle (2.5pt);
 \node at (-.5,-.5) {\tiny $\l_i -1$};
 \node at (1.15,1) { $\lambda  $};
\end{tikzpicture} }
\;\; =
\sfc_{i,\l}^+, \qquad \quad
\;
\hackcenter{ \begin{tikzpicture} [scale=.8]
 \draw (-.15,.35) node { $\scs i$};
 \draw  (0,0) arc (180:360:0.5cm) [thick];
 \draw[->](1,0) arc (0:180:0.5cm) [thick];
\filldraw  [black] (.9,-.25) circle (2.5pt);
 \node at (1.35,-.5) {\tiny $-\l_i -1$};
 \node at (1.15,1) { $\lambda $};
\end{tikzpicture} }
\;\; =
\sfc_{i,\l}^{-} .
\end{equation}
In the 2-category $\cal{U}_Q(\mf{g})$ all these bubble parameters are normalized to $1$.

A categorification $\cal{U}(\mf{gl}_n)$ of quantum $\mf{gl}_n$ and its Schur quotients were defined in \cite{MSV}.  Unlike the 2-category $\cal{U}_Q(\mf{g})$, this 2-category is cyclic.  However, the bubble parameters in $\cal{U}(\mf{gl}_n)$ take values of $\pm 1$ depending explicitly on the $\mf{gl}(n)$ weights.  Further, the relations that give isomorphisms lifting the $\mf{sl}_2$ relations
have a significant sign difference from $\cal{U}_Q(\mf{g})$ that force the values of degree zero bubbles to alternate sign as the weight changes along $\mf{sl}_2$ strings.
Additionally, the 2-category $\cal{U}(\mf{gl}_n)$ has a specific choice of scalars $Q$ for the KLR algebra that depends on an orientation of the underlying quiver.

In  \cite[Section 3.9]{Lau3} a version of the $\mf{sl}_2$ 2-category $\cal{U}_{\beta}(\mf{sl}_2)$ was defined with arbitrary values of bubbles parameters and a more general $\mf{sl}_2$ relation
\begin{equation}
\begin{split}
\hackcenter{\begin{tikzpicture}[scale=0.8]
    \draw[thick, ->] (0,0) -- (0,2);
    \draw[thick, <-] (.75,0) -- (.75,2);
     \node at (-.2,.2) {\tiny $i$};
    \node at (.95,.2) {\tiny $i$};
\end{tikzpicture}}
\;\; = \;\; \textcolor[rgb]{1.00,0.00,0.00}{\beta_{i,\l}} \;
 \hackcenter{\begin{tikzpicture}[scale=0.8]
    \draw[thick] (0,0) .. controls ++(0,.5) and ++(0,-.5) .. (.75,1);
    \draw[thick,<-] (.75,0) .. controls ++(0,.5) and ++(0,-.5) .. (0,1);
    \draw[thick] (0,1 ) .. controls ++(0,.5) and ++(0,-.5) .. (.75,2);
    \draw[thick, ->] (.75,1) .. controls ++(0,.5) and ++(0,-.5) .. (0,2);
        \node at (-.2,.15) {\tiny $i$};
    \node at (.95,.15) {\tiny $i$};
\end{tikzpicture}}
\;\; - \;\; \textcolor[rgb]{1.00,0.00,0.00}{\beta_{i,\l}} \;
\sum_{\overset{f_1+f_2+f_3}{=\l_i-1}}\hackcenter{
 \begin{tikzpicture}[scale=0.8]
 \draw[thick,->] (0,-1.0) .. controls ++(0,.5) and ++ (0,.5) .. (.8,-1.0) node[pos=.75, shape=coordinate](DOT1){};
  \draw[thick,<-] (0,1.0) .. controls ++(0,-.5) and ++ (0,-.5) .. (.8,1.0) node[pos=.75, shape=coordinate](DOT3){};
 \draw[thick,->] (0,0) .. controls ++(0,-.45) and ++ (0,-.45) .. (.8,0)node[pos=.25, shape=coordinate](DOT2){};
 \draw[thick] (0,0) .. controls ++(0,.45) and ++ (0,.45) .. (.8,0);
 \draw (-.15,.7) node { $\scs i$};
\draw (1.05,0) node { $\scs i$};
\draw (-.15,-.7) node { $\scs i$};
 \node at (.95,.65) {\tiny $f_3$};
 \node at (-.55,-.05) {\tiny $\overset{-\l_i-1}{+f_2}$};
  \node at (.95,-.65) {\tiny $f_1$};
 \node at (1.95,.3) { $\lambda $};
 \filldraw[thick]  (DOT3) circle (2.5pt);
  \filldraw[thick]  (DOT2) circle (2.5pt);
  \filldraw[thick]  (DOT1) circle (2.5pt);
\end{tikzpicture} }
\\
\hackcenter{\begin{tikzpicture}[scale=0.8]
    \draw[thick, <-] (0,0) -- (0,2);
    \draw[thick, ->] (.75,0) -- (.75,2);
     \node at (-.2,.2) {\tiny $i$};
    \node at (.95,.2) {\tiny $i$};
\end{tikzpicture}}
\;\; = \;\; \textcolor[rgb]{1.00,0.00,0.00}{\beta_{i,\l}} \;
 \hackcenter{\begin{tikzpicture}[scale=0.8]
    \draw[thick,<-] (0,0) .. controls ++(0,.5) and ++(0,-.5) .. (.75,1);
    \draw[thick] (.75,0) .. controls ++(0,.5) and ++(0,-.5) .. (0,1);
    \draw[thick, ->] (0,1 ) .. controls ++(0,.5) and ++(0,-.5) .. (.75,2);
    \draw[thick] (.75,1) .. controls ++(0,.5) and ++(0,-.5) .. (0,2);
        \node at (-.2,.15) {\tiny $i$};
    \node at (.95,.15) {\tiny $i$};
\end{tikzpicture}}
\;\; - \;\; \textcolor[rgb]{1.00,0.00,0.00}{\beta_{i,\l}} \;
\sum_{\overset{f_1+f_2+f_3}{=-\l_i-1}}\hackcenter{
 \begin{tikzpicture}[scale=0.8]
 \draw[thick,<-] (0,-1.0) .. controls ++(0,.5) and ++ (0,.5) .. (.8,-1.0) node[pos=.75, shape=coordinate](DOT1){};
  \draw[thick,->] (0,1.0) .. controls ++(0,-.5) and ++ (0,-.5) .. (.8,1.0) node[pos=.75, shape=coordinate](DOT3){};
 \draw[thick ] (0,0) .. controls ++(0,-.45) and ++ (0,-.45) .. (.8,0)node[pos=.25, shape=coordinate](DOT2){};
 \draw[thick, ->] (0,0) .. controls ++(0,.45) and ++ (0,.45) .. (.8,0);
 \draw (-.15,.7) node { $\scs i$};
\draw (1.05,0) node { $\scs i$};
\draw (-.15,-.7) node { $\scs i$};
 \node at (.95,.65) {\tiny $f_3$};
 \node at (-.55,-.05) {\tiny $\overset{\l_i-1}{+f_2}$};
  \node at (.95,-.65) {\tiny $f_1$};
 \node at (1.95,.3) { $\lambda $};
 \filldraw[thick]  (DOT3) circle (2.5pt);
  \filldraw[thick]  (DOT2) circle (2.5pt);
  \filldraw[thick]  (DOT1) circle (2.5pt);
\end{tikzpicture} } \label{eq:sl2}
\end{split}
\end{equation}
that generalizes the corresponding relation from the $\cal{U}(\mf{gl}_n)$ when $\beta_{i,\l}=+1$ and $\cal{U}_Q(\mf{g})$ when $\beta_{i,\l}=-1$ for all $i\in I$ and $\l \in X$.
For any choice of parameters $\beta_{i,\l}$, the 2-category $\cal{U}_{\beta}(\mf{sl}_2)$ is isomorphic to the original 2-category $\cal{U}(\mf{sl}_2)$ from \cite{Lau1}, although this isomorphism requires rescalings of caps and cups that depend on the weight modulo 4.  These parameters $\beta_{i,\l}$ control the values of higher degree bubbles and the definition of the `fake bubbles' as defined by the infinite Grassmannian equation
\begin{align}
  \left(
  \hackcenter{ \begin{tikzpicture} [scale=.6]
 \draw  (-.75,1) arc (360:180:.45cm) [thick];
 \draw[<-](-.75,1) arc (0:180:.45cm) [thick];
     \filldraw  [black] (-1.55,.75) circle (2.5pt);
        \node at (-1.3,.3) { $\scriptstyle \l_i -1$};
        \node at (-1.4,1.7) { $i $};
 \node at (-.2,1.5) { $\lambda $};
\end{tikzpicture}} \; t^0 \; + \;
  \hackcenter{ \begin{tikzpicture} [scale=.6]
 \draw  (-.75,1) arc (360:180:.45cm) [thick];
 \draw[<-](-.75,1) arc (0:180:.45cm) [thick];
     \filldraw  [black] (-1.55,.75) circle (2.5pt);
        \node at (-1.3,.3) { $\scriptstyle \l_i -1+1$};
        \node at (-1.4,1.7) { $i $};
 \node at (-.2,1.5) { $\lambda $};
\end{tikzpicture}} \; t^1
\;\; + \;\; \dots \;\; + \;\;
  \hackcenter{ \begin{tikzpicture} [scale=.6]
 \draw  (-.75,1) arc (360:180:.45cm) [thick];
 \draw[<-](-.75,1) arc (0:180:.45cm) [thick];
     \filldraw  [black] (-1.55,.75) circle (2.5pt);
        \node at (-1.3,.3) { $\scriptstyle \l_i -1 + r$};
        \node at (-1.4,1.7) { $i $};
 \node at (-.2,1.5) { $\lambda $};
\end{tikzpicture}} \; t^r
\;\; + \;\; \dots
  \right) \times  \hspace{1in} \nn \\
  \left(
  \hackcenter{ \begin{tikzpicture} [scale=.6]
 \draw  (-.75,1) arc (360:180:.45cm) [thick];
 \draw[->](-.75,1) arc (0:180:.45cm) [thick];
     \filldraw  [black] (-1.55,.75) circle (2.5pt);
        \node at (-1.3,.3) { $\scriptstyle -\l_i -1$};
        \node at (-1.4,1.7) { $i $};
 \node at (-.2,1.5) { $\lambda $};
\end{tikzpicture}} \; t^0 \; + \;
\hackcenter{ \begin{tikzpicture} [scale=.6]
 \draw  (-.75,1) arc (360:180:.45cm) [thick];
 \draw[->](-.75,1) arc (0:180:.45cm) [thick];
     \filldraw  [black] (-1.55,.75) circle (2.5pt);
        \node at (-1.3,.3) { $\scriptstyle -\l_i -1 + 1$};
        \node at (-1.4,1.7) { $i $};
 \node at (-.2,1.5) { $\lambda $};
\end{tikzpicture}}\; t
\; + \; \dots \; + \;
\hackcenter{ \begin{tikzpicture} [scale=.6]
 \draw  (-.75,1) arc (360:180:.45cm) [thick];
 \draw[->](-.75,1) arc (0:180:.45cm) [thick];
     \filldraw  [black] (-1.55,.75) circle (2.5pt);
        \node at (-1.3,.3) { $\scriptstyle -\l_i -1 + s$};
        \node at (-1.4,1.7) { $i $};
 \node at (-.2,1.5) { $\lambda $};
\end{tikzpicture}}\; t^s
\; + \; \dots
  \right)
  = \textcolor[rgb]{1.00,0.00,0.00}{-\frac{1}{\beta_{i,\l}}  }.
\end{align}
The parameters $\beta_{i,\l}$ also impact how bubble parameters change as weights change along $\mf{sl}_2$ strings.

In \cite{BHLW2} a cyclic variant $\cal{U}_Q^{cyc}(\mf{g})$ of the categorified quantum group was defined utilizing more general bubble coefficients.
This 2-category has $\mf{sl}_2$ relations with all $\beta_{i,\l}=-1$ so that bubble coefficients $\sfc_{i,\l}^+$ are constant along $\mf{sl}_2$ strings and $\sfc_{i,\l}^- = \left( \sfc_{i,\l}^+\right)^{-1}$.
In \cite[Theorem 2.1]{BHLW2} an explicit isomorphism of 2-categories is defined from $\cal{U}_Q^{cyc}(\mf{g})$ to $\cal{U}_Q(\mf{g})$.
A minimal presentation of this category can be determined from \cite{Brundan2}.

Here we introduce a variant $\cal{U}_{Q,\beta}(\mf{g})=\cal{U}_{Q,\beta}^{cyc}(\mf{g})$ of the categorified quantum group (Definition~\ref{def:Ubeta}) that generalizes all of the variants discussed above.  We then give a 2-isomorphism (Theorem~\ref{thm:beta}) showing this general 2-category is isomorphic to the cyclic 2-category $\cal{U}_Q^{cyc}(\mf{g})$ and hence $\cal{U}_Q(\mf{g})$.  Again, this is entirely expected by Brundan's uniqueness of categorification result.   This 2-isomorphism makes it possible to directly translate results from the $\mf{gl}_n$ version of the categorified quantum group to the $\mf{sl}_n$ variant.

In Sections~\ref{sec:nilhecke} and \ref{sec:generalQ} we provide new results showing how to extend various isomorphisms of KLR-algebras to the level of 2-categories.

\subsection{Acknowledgements}
The author is partially supported by the NSF grants DMS-1255334 and DMS-1664240.  He would also like to thank Joshua Sussan and Hoel Queffelec for comments on an early version of this note.

%
\section{The categorified quantum group $\cal{U}_{Q,\beta}(\mf{g})$}

Below we indicate in \textcolor[rgb]{1.00,0.00,0.00}{red} the changes in the definition of the categorified quantum group, so that experts can easily see the differences.

%
\subsubsection{Cartan data}
%

For this article we restrict our attention to simply-laced Kac-Moody algebras. These algebras are associated to a symmetric Cartan data consisting of
\begin{itemize}
\item a free $\Z$-module $X$ (the weight lattice),
\item for $i \in I$ ($I$ is an indexing set) there are elements $\alpha_i \in X$ (simple roots) and $\Lambda_i \in X$ (fundamental weights),
\item for $i \in I$ an element $h_i \in X^\vee = \Hom_{\Z}(X,\Z)$ (simple coroots),
\item a bilinear form $(\cdot,\cdot )$ on $X$.
\end{itemize}
Write $\langle \cdot, \cdot \rangle \maps X^{\vee} \times X
\to \Z$ for the canonical pairing. This data should satisfy:
\begin{itemize}
\item $(\alpha_i, \alpha_i) = 2$ for any $i\in I$,
\item $(\alpha_i,\alpha_j) \in \{ 0, -1\}$  for $i,j\in I$ with $i \neq j$,
\item $\la i,\lambda\ra :=\langle h_i, \lambda \rangle =  (\alpha_i,\lambda)$
  for $i \in I$ and $\lambda \in X$,
\item $\langle h_j, \Lambda_i \rangle =\delta_{ij}$ for all $i,j \in I$.
\end{itemize}
Hence $(a_{ij})_{i,j\in I}$ is a symmetrizable generalized Cartan matrix, where $a_{ij}=\langle
h_i, \alpha_j \rangle=(\alpha_i, \alpha_j)$.  We will sometimes denote the bilinear pairing $(\alpha_i,\alpha_j)$ by $i \cdot j$ and abbreviate $\la i,\lambda\ra$ to $\lambda_i$.
We denote by $X^+ \subset X$ the dominant weights which are of the form $\sum_i \lambda_i \Lambda_i$ where $\lambda_i \ge 0$.


\subsubsection{Parameters}

We introduce new parameters following \cite{Lau3} that extend the bubble parameters introduced for the cyclic version of the quantum group in \cite{BHLW2}.
\begin{definition}
Associated to a Cartan datum we define {\em bubble parameters $\beta$} to be a set consisting of
\begin{itemize}
   \item $\beta_i = \beta_{i,\l}  \in \Bbbk^{\times}$ for $i\in I$ and $\lambda \in X$,
  \item $
\sfc_{i,\l}^{+} \in \Bbbk^{\times}$  for $i\in I$ and $\lambda \in X$,
%
   \item $
 \sfc_{i,\l}^- \in \Bbbk^{\times}$  for $i\in I$ and $\lambda \in X$.
\end{itemize}
Note that we do not require that $\sfc_{i,\l}^- = (\sfc_{i,\l}^+)^{-1}$ as was done in \cite{BHLW2}.

\begin{definition} \label{eq:Q}
Associated to a symmetric Cartan datum define a {\em choice of scalars $Q$} consisting of:
\begin{itemize}
  \item $\left\{ t_{ij}  \mid \text{ for all $i,j \in I$ with $i \neq j$} \right\}$,
\end{itemize}
such that
\begin{itemize}
\item  $t_{ij} \in \Bbbk^{\times}$ ,
 \item $t_{ij}=t_{ji}$ when $a_{ij}=0$.
\end{itemize}
\end{definition}
It is convenient to define $t_{ii}:= -\beta_i=-\beta_{i,\l}$ to express the compatibility condition \eqref{eq:c-change-sl} in a uniform manner.
A choice of bubble parameters is said to be {\em compatible with the scalars $Q$} if
\begin{equation} \label{eq:ccinv}
 \textcolor[rgb]{1.00,0.00,0.00}{ \sfc_{i,\lambda}^+ \sfc_{i,\lambda}^-= - \frac{1}{\beta_{i}} = \frac{1}{t_{ii}},}
\end{equation}
\begin{equation} \label{eq:c-change-sl}
  \textcolor[rgb]{1.00,0.00,0.00}{ \sfc_{i,\lambda\pm \alpha_j}^{\pm} =   t_{ij}\sfc_{i,\lambda}^{\pm}}.
\end{equation}
\end{definition}
Such a compatible choice of scalars can be chosen for any $t_{ij}$ by
fixing an arbitrary choice of $\sfc_{i,\lambda}^+$ for a fixed coset representative in every coset of the root lattice in the weight lattice, and then
extending to the rest of the coset using the compatibility conditions.

For any choice of bubble parameters compatible with the choice of scalars $Q$ the values along an $\mf{sl}_2$-string depend on $\beta_{i,\l}$ since
\[
 \sfc_{i,\l+\alpha_i} = t_{ii} \sfc_{i,\l} = \textcolor[rgb]{1.00,0.00,0.00}{(-\beta_i)}\sfc_{i,\l},
\]
so that for all $k \in \Z$ we have $\sfc_{i,\l+k\alpha_i}=\textcolor[rgb]{1.00,0.00,0.00}{(-\beta_{i,\l})^{k}}\sfc_{i,\l}$.

\subsection{The general cyclic form of the categorified quantum group}
\label{2categorysection}

\begin{definition} \label{def:Ubeta}
Let $\beta$ be a choice of bubble parameters that is compatible with a choice of scalars $Q$.
The 2-category  $\Ucat_{Q,\beta}(\mf{g}):= \Ucat_{Q,\beta}^{cyc}(\mf{g})$ is the graded linear 2-category consisting of:
\begin{itemize}
\item \textbf{Objects} $\lambda$ for $\lambda \in X$.
\item \textbf{1-morphisms} are formal direct sums of (shifts of) compositions of
$$\onel, \quad \1_{\lambda+\alpha_i} \sE_i= \1_{\lambda+\alpha_i} \sE_i\onel, \quad \text{ and }\quad
\1_{\lambda-\alpha_i} \sF_i= \1_{\lambda-\alpha_i} \sF_i\onel$$
for $i \in I$ and $\lambda \in X$.  We denote the grading shift by $\la 1 \ra$, so that for each 1-morphism $x$ in $\cal{U}$ and $t\in \Z$ we a 1-morphism $x\la t\ra$.

\item \textbf{2-morphisms} are $\Bbbk$-vector spaces spanned by compositions of coloured, decorated tangle-like diagrams illustrated below.
\begin{align}
\hackcenter{\begin{tikzpicture}[scale=0.8]
    \draw[thick, ->] (0,0) -- (0,1.5)
        node[pos=.5, shape=coordinate](DOT){};
    \filldraw  (DOT) circle (2.5pt);
    \node at (-.85,.85) {\tiny $\lambda +\alpha_i$};
    \node at (.5,.85) {\tiny $\lambda$};
    \node at (-.2,.1) {\tiny $i$};
\end{tikzpicture}} &\maps \cal{E}_i\onel \to \cal{E}_i\onel \la i\cdot i \ra  & \quad
 &
  \hackcenter{\begin{tikzpicture}[scale=0.8]
    \draw[thick, ->] (0,0) .. controls (0,.5) and (.75,.5) .. (.75,1.0);
    \draw[thick, ->] (.75,0) .. controls (.75,.5) and (0,.5) .. (0,1.0);
    \node at (1.1,.55) {\tiny $\lambda$};
    \node at (-.2,.1) {\tiny $i$};
    \node at (.95,.1) {\tiny $j$};
\end{tikzpicture}} \;\;\maps \cal{E}_i\cal{E}_j\onel  \to \cal{E}_j\cal{E}_i\onel\la -i\cdot j \ra
 \nn \smallskip\\
\hackcenter{\begin{tikzpicture}[scale=0.8]
    \draw[thick, <-] (.75,2) .. controls ++(0,-.75) and ++(0,-.75) .. (0,2);
    \node at (.4,1.2) {\tiny $\lambda$};
    \node at (-.2,1.9) {\tiny $i$};
\end{tikzpicture}} \;\; &\maps \onel  \to \cal{F}_i\cal{E}_i\onel\la  1 + \lambda_i  \ra   &
    &
\hackcenter{\begin{tikzpicture}[scale=0.8]
    \draw[thick, ->] (.75,2) .. controls ++(0,-.75) and ++(0,-.75) .. (0,2);
    \node at (.4,1.2) {\tiny $\lambda$};
    \node at (.95,1.9) {\tiny $i$};
\end{tikzpicture}} \;\; \maps \onel  \to\cal{E}_i\cal{F}_i\onel\la  1 -  \lambda_i  \ra   \nn \smallskip \\
\hackcenter{\begin{tikzpicture}[scale=0.8]
    \draw[thick, ->] (.75,-2) .. controls ++(0,.75) and ++(0,.75) .. (0,-2);
    \node at (.4,-1.2) {\tiny $\lambda$};
    \node at (.95,-1.9) {\tiny $i$};
\end{tikzpicture}} \;\; & \maps \cal{F}_i\cal{E}_i\onel \to\onel\la  1 +  \lambda_i  \ra   &
    &
\hackcenter{\begin{tikzpicture}[scale=0.8]
    \draw[thick, <-] (.75,-2) .. controls ++(0,.75) and ++(0,.75) .. (0,-2);
    \node at (.4,-1.2) {\tiny $\lambda$};
    \node at (-.2,-1.9) {\tiny $i$};
\end{tikzpicture}} \;\;\maps\cal{E}_i\cal{F}_i\onel  \to\onel\la  1 -  \lambda_i  \ra  \nn
\end{align}
\end{itemize}
In this $2$-category (and those throughout the paper) we
read diagrams from right to left and bottom to top.  The identity 2-morphism of the 1-morphism
$\cal{E}_i \onel$ is
represented by an upward oriented line labelled by $i$ and the identity 2-morphism of $\cal{F}_i \onel$ is
represented by a downward such line.

The 2-morphisms satisfy the following relations:
\begin{enumerate}
\item \label{item_cycbiadjoint-cyc} The 1-morphisms $\cal{E}_i \onel$ and $\cal{F}_i \onel$ are biadjoint (up to a specified degree shift). These conditions are expressed diagrammatically as
\begin{equation} \label{eq_biadjoint1-cyc}
    \hackcenter{\begin{tikzpicture}[scale=0.7]
    \draw[thick, <-](0,0) .. controls ++(0,.6) and ++(0,.6) .. (-.75,0) to (-.75,-1);
    \draw[thick, ->](0,0) .. controls ++(0,-.6) and ++(0,-.6) .. (.75,0) to (.75,1);
    \node at (-.5,.9) { $\lambda+\alpha_i$};
    \node at (.8,-.9) { $\lambda$};
    \node at (-.95,-.8) {\tiny $i$};
\end{tikzpicture}}
\;\; = \;\;
    \hackcenter{\begin{tikzpicture}[scale=0.7]
    \draw[thick, ->](0,-1)   to (0,1);
    \node at (-.8,.4) { $\lambda+\alpha_i$};
    \node at (.5,.4) { $\lambda$};
    \node at (-.2,-.8) {\tiny $i$};
\end{tikzpicture}}
\qquad \quad
\hackcenter{\begin{tikzpicture}[scale=0.7]
    \draw[thick, ->](0,0) .. controls ++(0,.6) and ++(0,.6) .. (.75,0) to (.75,-1);
    \draw[thick, <-](0,0) .. controls ++(0,-.6) and ++(0,-.6) .. (-.75,0) to (-.75,1);
    \node at (.5,.9) { $\lambda+\alpha_i$};
    \node at (-.8,-.9) { $\lambda$};
    \node at (-.95,.8) {\tiny $i$};
\end{tikzpicture}}
\;\; = \;\;
    \hackcenter{\begin{tikzpicture}[scale=0.7]
    \draw[thick, <-]  (0,-1) to (0,1);
    \node at (.8,-.4) { $\lambda+\alpha_i$};
    \node at (-.5,-.4) { $\lambda$};
    \node at (-.2,.8) {\tiny $i$};
\end{tikzpicture}}
\end{equation}

\begin{equation} \label{eq_biadjoint2-cyc}
    \hackcenter{\begin{tikzpicture}[scale=0.7]
    \draw[thick, <-](0,0) .. controls ++(0,.6) and ++(0,.6) .. (.75,0) to (.75,-1);
    \draw[thick, ->](0,0) .. controls ++(0,-.6) and ++(0,-.6) .. (-.75,0) to (-.75,1);
    \node at (-.5,-.9) { $\lambda+\alpha_i$};
    \node at (.8,.9) { $\lambda$};
    \node at (.95,-.8) {\tiny $i$};
\end{tikzpicture}}
\;\; = \;\;
    \hackcenter{\begin{tikzpicture}[scale=0.7]
    \draw[thick, ->](0,-1)   to (0,1);
    \node at (-.8,.4) { $\lambda+\alpha_i$};
    \node at (.5,.4) { $\lambda$};
    \node at (-.2,-.8) {\tiny $i$};
\end{tikzpicture}}
\qquad \quad
\hackcenter{\begin{tikzpicture}[scale=0.7]
    \draw[thick, ->](0,0) .. controls ++(0,.6) and ++(0,.6) .. (-.75,0) to (-.75,-1);
    \draw[thick, <-](0,0) .. controls ++(0,-.6) and ++(0,-.6) .. (.75,0) to (.75,1);
    \node at (.5,-.9) { $\lambda+\alpha_i$};
    \node at (-.8,.9) { $\lambda$};
    \node at (.95,.8) {\tiny $i$};
\end{tikzpicture}}
\;\; = \;\;
\hackcenter{\begin{tikzpicture}[scale=0.7]
    \draw[thick, <-]  (0,-1) to (0,1);
    \node at (.8,-.4) { $\lambda+\alpha_i$};
    \node at (-.5,-.4) { $\lambda$};
    \node at (-.2,.8) {\tiny $i$};
\end{tikzpicture}}
\end{equation}

  \item The 2-morphisms are cyclic with respect to this biadjoint structure.
\begin{equation}\label{eq_cyclic_dot-cyc}
\hackcenter{\begin{tikzpicture}[scale=0.7]
    \draw[thick, <-]  (0,-1) to (0,1);
    \node at (.8,-.4) { $\lambda+\alpha_i$};
    \node at (-.5,-.4) { $\lambda$};
    \filldraw  (0,.2) circle (2.5pt);
    \node at (-.2,.8) {\tiny $i$};
\end{tikzpicture}}
\;\; := \;\;\hackcenter{\begin{tikzpicture}[scale=0.7]
    \draw[thick, ->]  (0,.4) .. controls ++(0,.6) and ++(0,.6) .. (-.75,.4) to (-.75,-1);
    \draw[thick, <-](0,.4) to (0,-.4) .. controls ++(0,-.6) and ++(0,-.6) .. (.75,-.4) to (.75,1);
    \filldraw  (0,-.2) circle (2.5pt);
    \node at (-1,.9) { $\lambda$};
    \node at (.95,.8) {\tiny $i$};
\end{tikzpicture}}
\;\; = \;\;
\hackcenter{\begin{tikzpicture}[scale=0.7]
    \draw[thick, ->]  (0,.4) .. controls ++(0,.6) and ++(0,.6) .. (.75,.4) to (.75,-1);
    \draw[thick, <-](0,.4) to (0,-.4) .. controls ++(0,-.6) and ++(0,-.6) .. (-.75,-.4) to (-.75,1);
    \filldraw  (0,-.2) circle (2.5pt);
    \node at (1.3,.9) { $\lambda + \alpha_i$};
    \node at (-.95,.8) {\tiny $i$};
\end{tikzpicture}}
\end{equation}

The cyclic relations for crossings are given by
\begin{equation} \label{eq_cyclic}
\hackcenter{
\begin{tikzpicture}[scale=0.7]
    \draw[thick, <-] (0,0) .. controls (0,.5) and (.75,.5) .. (.75,1.0);
    \draw[thick, <-] (.75,0) .. controls (.75,.5) and (0,.5) .. (0,1.0);
    \node at (1.1,.65) { $\lambda$};
    \node at (-.2,.1) {\tiny $i$};
    \node at (.95,.1) {\tiny $j$};
\end{tikzpicture}}
\;\; := \;\;
\hackcenter{\begin{tikzpicture}[scale=0.7]
    \draw[thick, ->] (0,0) .. controls (0,.5) and (.75,.5) .. (.75,1.0);
    \draw[thick, ->] (.75,0) .. controls (.75,.5) and (0,.5) .. (0,1.0);
    \draw[thick] (0,0) .. controls ++(0,-.4) and ++(0,-.4) .. (-.75,0) to (-.75,2);
    \draw[thick] (.75,0) .. controls ++(0,-1.2) and ++(0,-1.2) .. (-1.5,0) to (-1.55,2);
    \draw[thick, ->] (.75,1.0) .. controls ++(0,.4) and ++(0,.4) .. (1.5,1.0) to (1.5,-1);
    \draw[thick, ->] (0,1.0) .. controls ++(0,1.2) and ++(0,1.2) .. (2.25,1.0) to (2.25,-1);
    \node at (-.35,.75) {  $\lambda$};
    \node at (1.3,-.7) {\tiny $i$};
    \node at (2.05,-.7) {\tiny $j$};
    \node at (-.9,1.7) {\tiny $i$};
    \node at (-1.7,1.7) {\tiny $j$};
\end{tikzpicture}}
\quad = \quad
\hackcenter{\begin{tikzpicture}[xscale=-1.0, scale=0.7]
    \draw[thick, ->] (0,0) .. controls (0,.5) and (.75,.5) .. (.75,1.0);
    \draw[thick, ->] (.75,0) .. controls (.75,.5) and (0,.5) .. (0,1.0);
    \draw[thick] (0,0) .. controls ++(0,-.4) and ++(0,-.4) .. (-.75,0) to (-.75,2);
    \draw[thick] (.75,0) .. controls ++(0,-1.2) and ++(0,-1.2) .. (-1.5,0) to (-1.55,2);
    \draw[thick, ->] (.75,1.0) .. controls ++(0,.4) and ++(0,.4) .. (1.5,1.0) to (1.5,-1);
    \draw[thick, ->] (0,1.0) .. controls ++(0,1.2) and ++(0,1.2) .. (2.25,1.0) to (2.25,-1);
    \node at (1.2,.75) {  $\lambda$};
    \node at (1.3,-.7) {\tiny $j$};
    \node at (2.05,-.7) {\tiny $i$};
    \node at (-.9,1.7) {\tiny $j$};
    \node at (-1.7,1.7) {\tiny $i$};
\end{tikzpicture}}
\end{equation}

Sideways crossings are equivalently defined by the following identities:
\begin{equation} \label{eq_crossl-gen-cyc}
\hackcenter{
\begin{tikzpicture}[scale=0.8]
    \draw[thick, ->] (0,0) .. controls (0,.5) and (.75,.5) .. (.75,1.0);
    \draw[thick, <-] (.75,0) .. controls (.75,.5) and (0,.5) .. (0,1.0);
    \node at (1.1,.65) { $\lambda$};
    \node at (-.2,.1) {\tiny $i$};
    \node at (.95,.1) {\tiny $j$};
\end{tikzpicture}}
\;\; := \;\;
\hackcenter{\begin{tikzpicture}[scale=0.7]
    \draw[thick, ->] (0,0) .. controls (0,.5) and (.75,.5) .. (.75,1.0);
    \draw[thick, ->] (.75,-.5) to (.75,0) .. controls (.75,.5) and (0,.5) .. (0,1.0) to (0,1.5);
    \draw[thick] (0,0) .. controls ++(0,-.4) and ++(0,-.4) .. (-.75,0) to (-.75,1.5);
    \draw[thick, ->] (.75,1.0) .. controls ++(0,.4) and ++(0,.4) .. (1.5,1.0) to (1.5,-.5);
    \node at (1.85,.55) {  $\lambda$};
    \node at (1.75,-.2) {\tiny $j$};
    \node at (.55,-.2) {\tiny $i$};
    \node at (-.9,1.2) {\tiny $j$};
    \node at (.25,1.2) {\tiny $i$};
\end{tikzpicture}}
\qquad \qquad
\hackcenter{
\begin{tikzpicture}[scale=0.8]
    \draw[thick, <-] (0,0) .. controls (0,.5) and (.75,.5) .. (.75,1.0);
    \draw[thick, ->] (.75,0) .. controls (.75,.5) and (0,.5) .. (0,1.0);
    \node at (1.1,.65) { $\lambda$};
    \node at (-.2,.1) {\tiny $i$};
    \node at (.95,.1) {\tiny $j$};
\end{tikzpicture}}
\;\; := \;\;
\hackcenter{\begin{tikzpicture}[xscale=-1.0, scale=0.7]
    \draw[thick, ->] (0,0) .. controls (0,.5) and (.75,.5) .. (.75,1.0);
    \draw[thick, ->] (.75,-.5) to (.75,0) .. controls (.75,.5) and (0,.5) .. (0,1.0) to (0,1.5);
    \draw[thick] (0,0) .. controls ++(0,-.4) and ++(0,-.4) .. (-.75,0) to (-.75,1.5);
    \draw[thick, ->] (.75,1.0) .. controls ++(0,.4) and ++(0,.4) .. (1.5,1.0) to (1.5,-.5);
    \node at (-1.1,.55) {  $\lambda$};
    \node at (1.75,-.2) {\tiny $i$};
    \node at (1,-.2) {\tiny $j$};
    \node at (-.9,1.2) {\tiny $i$};
    \node at (.25,1.2) {\tiny $j$};
\end{tikzpicture}}
\end{equation}

\item The $\cal{E}$'s (respectively $\cal{F}$'s) carry an action of the KLR algebra for a fixed choice of parameters $Q$.
The KLR algebra $R_Q$ associated to a fixed set of parameters $Q$ is defined by finite $\Bbbk$-linear combinations of braid--like diagrams in the plane, where each strand is labelled by a vertex $i \in I$.  Strands can intersect and can carry dots, but triple intersections are not allowed.  Diagrams are considered up to planar isotopy that do not change the combinatorial type of the diagram. We recall the local relations:

\begin{enumerate}[i)]

\item The quadratic KLR relations are
\begin{equation}
\hackcenter{
\begin{tikzpicture}[scale=0.8]
    \draw[thick, ->] (0,0) .. controls ++(0,.5) and ++(0,-.4) .. (.75,.8) .. controls ++(0,.4) and ++(0,-.5) .. (0,1.6);
    \draw[thick, ->] (.75,0) .. controls ++(0,.5) and ++(0,-.4) .. (0,.8) .. controls ++(0,.4) and ++(0,-.5) .. (.75,1.6);
    \node at (1.1,1.25) { $\lambda$};
    \node at (-.2,.1) {\tiny $i$};
    \node at (.95,.1) {\tiny $j$};
\end{tikzpicture}}
 \qquad = \qquad
 \left\{
 \begin{array}{ccc}
 0 & & \text{if $i \cdot j=2$, } \\ \\
     t_{ij}\;
     \hackcenter{
\begin{tikzpicture}[scale=0.8]
    \draw[thick, ->] (0,0) to (0,1.6);
    \draw[thick, ->] (.75,0) to (.75,1.6);
    \node at (1.1,1.25) { $\lambda$};
    \node at (-.2,.1) {\tiny $i$};
    \node at (.95,.1) {\tiny $j$};
\end{tikzpicture}}&  &  \text{if $i \cdot j=0$,}\\ \\
  t_{ij}
  \;      \hackcenter{
\begin{tikzpicture}[scale=0.8]
    \draw[thick, ->] (0,0) to (0,1.6);
    \draw[thick, ->] (.75,0) to (.75,1.6);
    \node at (1.1,1.25) { $\lambda$}; \filldraw  (0,.8) circle (2.5pt);
    \node at (-.2,.1) {\tiny $i$};
    \node at (.95,.1) {\tiny $j$};
\end{tikzpicture}}
  \;\; + \;\; t_{ji} \;
 \hackcenter{
\begin{tikzpicture}[scale=0.8]
    \draw[thick, ->] (0,0) to (0,1.6);
    \draw[thick, ->] (.75,0) to (.75,1.6);
    \node at (1.1,1.25) { $\lambda$}; \filldraw  (.75,.8) circle (2.5pt);
    \node at (-.2,.1) {\tiny $i$};
    \node at (.95,.1) {\tiny $j$};
\end{tikzpicture}}
   &  & \text{if $i \cdot j =-1$,}
 \end{array}
 \right. \label{eq_r2_ij-gen-cyc}
\end{equation}

\item The nilHecke dot sliding relations
\begin{align} \label{eq:nil-dot}
\hackcenter{\begin{tikzpicture}[scale=0.8]
    \draw[thick, ->] (0,0) .. controls ++(0,.55) and ++(0,-.5) .. (.75,1)
        node[pos=.25, shape=coordinate](DOT){};
    \draw[thick, ->] (.75,0) .. controls ++(0,.5) and ++(0,-.5) .. (0,1);
    \filldraw  (DOT) circle (2.5pt);
    \node at (-.2,.15) {\tiny $i$};
    \node at (.95,.15) {\tiny $i$};
\end{tikzpicture}}
\quad-\quad
\hackcenter{\begin{tikzpicture}[scale=0.8]
    \draw[thick, ->] (0,0) .. controls ++(0,.55) and ++(0,-.5) .. (.75,1)
        node[pos=.75, shape=coordinate](DOT){};
    \draw[thick, ->] (.75,0) .. controls ++(0,.5) and ++(0,-.5) .. (0,1);
    \filldraw  (DOT) circle (2.5pt);
    \node at (-.2,.15) {\tiny $i$};
    \node at (.95,.15) {\tiny $i$};
\end{tikzpicture}}
\quad=\quad
\hackcenter{\begin{tikzpicture}[scale=0.8]
    \draw[thick, ->] (0,0) .. controls ++(0,.55) and ++(0,-.5) .. (.75,1);
    \draw[thick, ->] (.75,0) .. controls ++(0,.5) and ++(0,-.5) .. (0,1) node[pos=.75, shape=coordinate](DOT){};
    \filldraw  (DOT) circle (2.5pt);
    \node at (-.2,.15) {\tiny $i$};
    \node at (.95,.15) {\tiny $i$};
\end{tikzpicture}}
\quad-\quad
\hackcenter{\begin{tikzpicture}[scale=0.8]
    \draw[thick, ->] (0,0) .. controls ++(0,.55) and ++(0,-.5) .. (.75,1);
    \draw[thick, ->] (.75,0) .. controls ++(0,.5) and ++(0,-.5) .. (0,1) node[pos=.25, shape=coordinate](DOT){};
    \filldraw  (DOT) circle (2.5pt);
    \node at (-.2,.15) {\tiny $i$};
    \node at (.95,.15) {\tiny $i$};
\end{tikzpicture}}
&\quad=\quad
\hackcenter{\begin{tikzpicture}[scale=0.8]
    \draw[thick, ->] (0,0) to  (0,1);
    \draw[thick, ->] (.75,0)to (.75,1) ;
    \node at (-.2,.15) {\tiny $i$};
    \node at (.95,.15) {\tiny $i$};
\end{tikzpicture}}
\end{align}

\item For $i \neq j$ the dot sliding relations
\begin{align} \label{eq:nil-dot}
\hackcenter{\begin{tikzpicture}[scale=0.8]
    \draw[thick, ->] (0,0) .. controls ++(0,.55) and ++(0,-.5) .. (.75,1)
        node[pos=.25, shape=coordinate](DOT){};
    \draw[thick, ->] (.75,0) .. controls ++(0,.5) and ++(0,-.5) .. (0,1);
    \filldraw  (DOT) circle (2.5pt);
    \node at (-.2,.15) {\tiny $i$};
    \node at (.95,.15) {\tiny $j$};
\end{tikzpicture}}
\quad=\quad
\hackcenter{\begin{tikzpicture}[scale=0.8]
    \draw[thick, ->] (0,0) .. controls ++(0,.55) and ++(0,-.5) .. (.75,1)
        node[pos=.75, shape=coordinate](DOT){};
    \draw[thick, ->] (.75,0) .. controls ++(0,.5) and ++(0,-.5) .. (0,1);
    \filldraw  (DOT) circle (2.5pt);
    \node at (-.2,.15) {\tiny $i$};
    \node at (.95,.15) {\tiny $j$};
\end{tikzpicture}}
\quad \qquad
\hackcenter{\begin{tikzpicture}[scale=0.8]
    \draw[thick, ->] (0,0) .. controls ++(0,.55) and ++(0,-.5) .. (.75,1);
    \draw[thick, ->] (.75,0) .. controls ++(0,.5) and ++(0,-.5) .. (0,1) node[pos=.75, shape=coordinate](DOT){};
    \filldraw  (DOT) circle (2.5pt);
    \node at (-.2,.15) {\tiny $i$};
    \node at (.95,.15) {\tiny $j$};
\end{tikzpicture}}
\quad= \quad
\hackcenter{\begin{tikzpicture}[scale=0.8]
    \draw[thick, ->] (0,0) .. controls ++(0,.55) and ++(0,-.5) .. (.75,1);
    \draw[thick, ->] (.75,0) .. controls ++(0,.5) and ++(0,-.5) .. (0,1) node[pos=.25, shape=coordinate](DOT){};
    \filldraw  (DOT) circle (2.5pt);
    \node at (-.2,.15) {\tiny $i$};
    \node at (.95,.15) {\tiny $j$};
\end{tikzpicture}}
\end{align}
hold.

\item Unless $i = k$ and $i \cdot j =-1$ the relation
\begin{align} \label{heis:up-triple}
\hackcenter{\begin{tikzpicture}[scale=0.8]
    \draw[thick, ->] (0,0) .. controls ++(0,1) and ++(0,-1) .. (1.5,2);
    \draw[thick, ] (.75,0) .. controls ++(0,.5) and ++(0,-.5) .. (0,1);
    \draw[thick, ->] (0,1) .. controls ++(0,.5) and ++(0,-.5) .. (0.75,2);
    \draw[thick, ->] (1.5,0) .. controls ++(0,1) and ++(0,-1) .. (0,2);
    \node at (-.2,.15) {\tiny $i$};
    \node at (.95,.15) {\tiny $j$};
    \node at (1.25,.15) {\tiny $k$};
\end{tikzpicture}}
&\;\; = \;\;
\hackcenter{\begin{tikzpicture}[scale=0.8]
    \draw[thick, ->] (0,0) .. controls ++(0,1) and ++(0,-1) .. (1.5,2);
    \draw[thick, ] (.75,0) .. controls ++(0,.5) and ++(0,-.5) .. (1.5,1);
    \draw[thick, ->] (1.5,1) .. controls ++(0,.5) and ++(0,-.5) .. (0.75,2);
    \draw[thick, ->] (1.5,0) .. controls ++(0,1) and ++(0,-1) .. (0,2);
    \node at (-.2,.15) {\tiny $i$};
    \node at (.95,.15) {\tiny $j$};
    \node at (1.75,.15) {\tiny $k$};
\end{tikzpicture}}
\end{align}
holds. Otherwise, $i \cdot j =-1$ and
\begin{equation} \label{eq:KLR-r3}
\hackcenter{\begin{tikzpicture}[scale=0.8]
    \draw[thick, ->] (0,0) .. controls ++(0,1) and ++(0,-1) .. (1.5,2);
    \draw[thick, ] (.75,0) .. controls ++(0,.5) and ++(0,-.5) .. (0,1);
    \draw[thick, ->] (0,1) .. controls ++(0,.5) and ++(0,-.5) .. (0.75,2);
    \draw[thick, ->] (1.5,0) .. controls ++(0,1) and ++(0,-1) .. (0,2);
    \node at (-.2,.15) {\tiny $i$};
    \node at (.95,.15) {\tiny $j$};
    \node at (1.75,.15) {\tiny $i$};
\end{tikzpicture}}
\;\;- \;\;
\hackcenter{\begin{tikzpicture}[scale=0.8]
    \draw[thick, ->] (0,0) .. controls ++(0,1) and ++(0,-1) .. (1.5,2);
    \draw[thick, ] (.75,0) .. controls ++(0,.5) and ++(0,-.5) .. (1.5,1);
    \draw[thick, ->] (1.5,1) .. controls ++(0,.5) and ++(0,-.5) .. (0.75,2);
    \draw[thick, ->] (1.5,0) .. controls ++(0,1) and ++(0,-1) .. (0,2);
    \node at (-.2,.15) {\tiny $i$};
    \node at (.95,.15) {\tiny $j$};
    \node at (1.75,.15) {\tiny $i$};
\end{tikzpicture}}
\;\; = \;\; t_{ij}
\hackcenter{\begin{tikzpicture}[scale=0.8]
    \draw[thick, ->] (0,0) to (0,2);
    \draw[thick, -> ] (.75,0) to (0.75,2);
    \draw[thick, ->] (1.5,0) to (1.5,2);
    \node at (-.2,.15) {\tiny $i$};
    \node at (.95,.15) {\tiny $j$};
    \node at (1.75,.15) {\tiny $i$};
\end{tikzpicture}}
\end{equation}
\end{enumerate}

\item When $i \ne j$ one has the mixed relations  relating $\cal{E}_i \cal{F}_j$ and $\cal{F}_j \cal{E}_i$:
\begin{equation}  \label{mixed_rel-cyc}
 \hackcenter{\begin{tikzpicture}[scale=0.8]
    \draw[thick,<-] (0,0) .. controls ++(0,.5) and ++(0,-.5) .. (.75,1);
    \draw[thick] (.75,0) .. controls ++(0,.5) and ++(0,-.5) .. (0,1);
    \draw[thick, ->] (0,1 ) .. controls ++(0,.5) and ++(0,-.5) .. (.75,2);
    \draw[thick] (.75,1) .. controls ++(0,.5) and ++(0,-.5) .. (0,2);
        \node at (-.2,.15) {\tiny $j$};
    \node at (.95,.15) {\tiny $i$};
\end{tikzpicture}}
\;\; = \;\;
\hackcenter{\begin{tikzpicture}[scale=0.8]
    \draw[thick, <-] (0,0) -- (0,2);
    \draw[thick, ->] (.75,0) -- (.75,2);
     \node at (-.2,.2) {\tiny $j$};
    \node at (.95,.2) {\tiny $i$};
\end{tikzpicture}}
\qquad \qquad
 \hackcenter{\begin{tikzpicture}[scale=0.8]
    \draw[thick] (0,0) .. controls ++(0,.5) and ++(0,-.5) .. (.75,1);
    \draw[thick, <-] (.75,0) .. controls ++(0,.5) and ++(0,-.5) .. (0,1);
    \draw[thick] (0,1 ) .. controls ++(0,.5) and ++(0,-.5) .. (.75,2);
    \draw[thick, ->] (.75,1) .. controls ++(0,.5) and ++(0,-.5) .. (0,2);
        \node at (-.2,.15) {\tiny $i$};
    \node at (.95,.15) {\tiny $j$};
\end{tikzpicture}}
\;\; = \;\;
\hackcenter{\begin{tikzpicture}[scale=0.8]
    \draw[thick, ->] (0,0) -- (0,2);
    \draw[thick, <-] (.75,0) -- (.75,2);
     \node at (-.2,.2) {\tiny $i$};
    \node at (.95,.2) {\tiny $j$};
\end{tikzpicture}}
\end{equation}

\item Negative degree bubbles are zero.  That is for all $m \in \Z_{>0}$ one has
\begin{equation}
 \hackcenter{ \begin{tikzpicture} [scale=.8]
 \draw (-.15,.35) node { $\scs i$};
 \draw[ ]  (0,0) arc (180:360:0.5cm) [thick];
 \draw[<- ](1,0) arc (0:180:0.5cm) [thick];
\filldraw  [black] (.1,-.25) circle (2.5pt);
 \node at (-.2,-.5) {\tiny $m$};
 \node at (1.15,.8) { $\lambda  $};
\end{tikzpicture} } \;  = 0\quad \text{if $m < \l_i -1$}, \qquad \quad
\;
\hackcenter{ \begin{tikzpicture} [scale=.8]
 \draw (-.15,.35) node { $\scs i$};
 \draw  (0,0) arc (180:360:0.5cm) [thick];
 \draw[->](1,0) arc (0:180:0.5cm) [thick];
\filldraw  [black] (.9,-.25) circle (2.5pt);
 \node at (1,-.5) {\tiny $m$};
 \node at (1.15,.8) { $\lambda $};
\end{tikzpicture} } \;  = 0 \quad  \text{if $m < -\l_i -1$}
\end{equation}
Furthermore, dotted bubbles of degree zero are scalar multiples of the identity 2-morphism determined by the bubble parameters
\begin{equation} \label{eq:degreezero}
 \hackcenter{ \begin{tikzpicture} [scale=.8]
 \draw (-.15,.35) node { $\scs i$};
 \draw  (0,0) arc (180:360:0.5cm) [thick];
 \draw[,<-](1,0) arc (0:180:0.5cm) [thick];
\filldraw  [black] (.1,-.25) circle (2.5pt);
 \node at (-.5,-.5) {\tiny $\l_i -1$};
 \node at (1.15,1) { $\lambda  $};
\end{tikzpicture} }
\;\; =
\sfc_{i,\l}^+ \cdot  \Id_{\1_{\l}}
\quad \text{for $  \l_i \geq 1$}, \qquad \quad
\;
\hackcenter{ \begin{tikzpicture} [scale=.8]
 \draw (-.15,.35) node { $\scs i$};
 \draw  (0,0) arc (180:360:0.5cm) [thick];
 \draw[->](1,0) arc (0:180:0.5cm) [thick];
\filldraw  [black] (.9,-.25) circle (2.5pt);
 \node at (1.35,-.5) {\tiny $-\l_i -1$};
 \node at (1.15,1) { $\lambda $};
\end{tikzpicture} }
\;\; =
\sfc_{i,\l}^{-}  \cdot \Id_{\1_{\l}} \quad \text{if $   \l_i \leq -1$.}
\end{equation}

 We introduce formal symbols called \emph{fake bubbles}.  These are positive degree endomorphisms of $\onel$ that carry a formal label by a negative number of dots.

\begin{itemize}
  \item Degree zero fake bubbles are equal to
  \begin{equation}
 \hackcenter{ \begin{tikzpicture} [scale=.8]
 \draw (-.15,.35) node { $\scs i$};
 \draw  (0,0) arc (180:360:0.5cm) [thick];
 \draw[,<-](1,0) arc (0:180:0.5cm) [thick];
\filldraw  [black] (.1,-.25) circle (2.5pt);
 \node at (-.5,-.55) {\tiny $\l_i -1$};
 \node at (1.15,1) { $\lambda  $};
\end{tikzpicture} } \;\; =  \sfc_{i,\l}^+\cdot \Id_{\1_{\l}} \quad \text{for $  \l_i \leq 0$}, \qquad \quad
\;
\hackcenter{ \begin{tikzpicture} [scale=.8]
 \draw (-.15,.35) node { $\scs i$};
 \draw  (0,0) arc (180:360:0.5cm) [thick];
 \draw[->](1,0) arc (0:180:0.5cm) [thick];
\filldraw  [black] (.9,-.25) circle (2.5pt);
 \node at (1.35,-.5) {\tiny $-\l_i -1$};
 \node at (1.15,1) { $\lambda $};
\end{tikzpicture} } \;\; = \sfc_{i,\l}^{-}\cdot \Id_{\1_{\l}} \quad \text{if $   \l_i \geq 0$}
\end{equation}
(compare with \eqref{eq:degreezero}).
\item Higher degree fake bubbles for $\l_i <0$ are defined inductively as
\begin{equation} \label{eq:fake1}
  \hackcenter{ \begin{tikzpicture} [scale=.8]
 \draw (-.15,.35) node { $\scs i$};
 \draw  (0,0) arc (180:360:0.5cm) [thick];
 \draw[,<-](1,0) arc (0:180:0.5cm) [thick];
\filldraw  [black] (.1,-.25) circle (2.5pt);
 \node at (-.65,-.55) {\tiny $\l_i -1+j$};
 \node at (1.15,1) { $\lambda  $};
\end{tikzpicture} } \;\; = \;\;
\left\{
  \begin{array}{ll}
    \textcolor[rgb]{1.00,0.00,0.00}{- \frac{1}{ \sfc_{i,\l}^{-}} }
\displaystyle \sum_{\stackrel{\scs x+y=j}{\scs y\geq 1}} \hackcenter{ \begin{tikzpicture}[scale=.8]
 \draw (-.15,.35) node { $\scs i$};
 \draw  (0,0) arc (180:360:0.5cm) [thick];
 \draw[,<-](1,0) arc (0:180:0.5cm) [thick];
\filldraw  [black] (.1,-.25) circle (2.5pt);
 \node at (-.35,-.45) {\tiny $\overset{\l_i-1}{+x}$};
 \node at (.85,1) { $\lambda$};
\end{tikzpicture}  \;\;
\begin{tikzpicture}[scale=.8]
 \draw (-.15,.35) node { $\scs i$};
 \draw  (0,0) arc (180:360:0.5cm) [thick];
 \draw[->](1,0) arc (0:180:0.5cm) [thick];
\filldraw  [black] (.9,-.25) circle (2.5pt);
 \node at (1.45,-.5) {\tiny $\overset{-\l_i-1}{+y}$};
 \node at (1.15,1.1) { $\;$};
\end{tikzpicture}  }
  & \hbox{if $0 < j < -\l_i+1$;} \\
    0, & \hbox{if $j<0$.}
  \end{array}
\right.
\end{equation}

\item Higher degree fake bubbles for $\l_i >0$ are defined inductively as
\begin{equation}\label{eq:fake2}
\hackcenter{ \begin{tikzpicture} [scale=.8]
 \draw (-.15,.35) node { $\scs i$};
 \draw  (0,0) arc (180:360:0.5cm) [thick];
 \draw[->](1,0) arc (0:180:0.5cm) [thick];
\filldraw  [black] (.9,-.25) circle (2.5pt);
 \node at (1.5,-.5) {\tiny $-\l_i -1+j$};
 \node at (1.15,1) { $\lambda $};
\end{tikzpicture} } \;\; = \;\;
\left\{
  \begin{array}{ll}
    \textcolor[rgb]{1.00,0.00,0.00}{- \frac{1}{  \sfc_{i,\l}^{+} } }
\displaystyle\sum_{\stackrel{\scs x+y=j}{\scs x\geq 1}} \hackcenter{ \begin{tikzpicture}[scale=.8]
 \draw (-.15,.35) node { $\scs i$};
 \draw  (0,0) arc (180:360:0.5cm) [thick];
 \draw[,<-](1,0) arc (0:180:0.5cm) [thick];
\filldraw  [black] (.1,-.25) circle (2.5pt);
 \node at (-.35,-.45) {\tiny $\overset{\l_i-1}{+x}$};
 \node at (.85,1) { $\lambda$};
\end{tikzpicture}  \;\;
\begin{tikzpicture}[scale=.8]
 \draw (.3,.125) node {};
 \draw  (0,0) arc (180:360:0.5cm) [thick];
 \draw[->](1,0) arc (0:180:0.5cm) [thick];
\filldraw  [black] (.9,-.25) circle (2.5pt);
 \node at (1.45,-.5) {\tiny $\overset{-\l_i-1}{+y}$};
 \node at (1.15,1.1) { $\;$};
\end{tikzpicture}  }
  & \hbox{if $0 < j < \l_i+1$;} \\
    0, & \hbox{if $j<0$.}
  \end{array}
\right.
\end{equation}
\end{itemize}
The above relations are sometimes referred to as the \textit{infinite Grassmannian relations} which are given by comparing coefficients of $t$ in the expression\footnote{Here the product formula for the infinite Grassmannian relation is equal to $-\frac{1}{\beta_{i,\l}}$ rather than $1$ as is usually the case.    This is one motivation for considering all $\beta_{i,\l}=-1$ as a natural choice.}:
\begin{align}
  \left(
  \hackcenter{ \begin{tikzpicture} [scale=.6]
 \draw  (-.75,1) arc (360:180:.45cm) [thick];
 \draw[<-](-.75,1) arc (0:180:.45cm) [thick];
     \filldraw  [black] (-1.55,.75) circle (2.5pt);
        \node at (-1.3,.3) { $\scriptstyle \l_i -1$};
        \node at (-1.4,1.7) { $i $};
 \node at (-.2,1.5) { $\lambda $};
\end{tikzpicture}} \; t^0 \; + \;
  \hackcenter{ \begin{tikzpicture} [scale=.6]
 \draw  (-.75,1) arc (360:180:.45cm) [thick];
 \draw[<-](-.75,1) arc (0:180:.45cm) [thick];
     \filldraw  [black] (-1.55,.75) circle (2.5pt);
        \node at (-1.3,.3) { $\scriptstyle \l_i -1+1$};
        \node at (-1.4,1.7) { $i $};
 \node at (-.2,1.5) { $\lambda $};
\end{tikzpicture}} \; t^1
\;\; + \;\; \dots \;\; + \;\;
  \hackcenter{ \begin{tikzpicture} [scale=.6]
 \draw  (-.75,1) arc (360:180:.45cm) [thick];
 \draw[<-](-.75,1) arc (0:180:.45cm) [thick];
     \filldraw  [black] (-1.55,.75) circle (2.5pt);
        \node at (-1.3,.3) { $\scriptstyle \l_i -1 + r$};
        \node at (-1.4,1.7) { $i $};
 \node at (-.2,1.5) { $\lambda $};
\end{tikzpicture}} \; t^r
\;\; + \;\; \dots
  \right) \times  \hspace{1in} \nn \\
  \left(
  \hackcenter{ \begin{tikzpicture} [scale=.6]
 \draw  (-.75,1) arc (360:180:.45cm) [thick];
 \draw[->](-.75,1) arc (0:180:.45cm) [thick];
     \filldraw  [black] (-1.55,.75) circle (2.5pt);
        \node at (-1.3,.3) { $\scriptstyle -\l_i -1$};
        \node at (-1.4,1.7) { $i $};
 \node at (-.2,1.5) { $\lambda $};
\end{tikzpicture}} \; t^0 \; + \;
\hackcenter{ \begin{tikzpicture} [scale=.6]
 \draw  (-.75,1) arc (360:180:.45cm) [thick];
 \draw[->](-.75,1) arc (0:180:.45cm) [thick];
     \filldraw  [black] (-1.55,.75) circle (2.5pt);
        \node at (-1.3,.3) { $\scriptstyle -\l_i -1 + 1$};
        \node at (-1.4,1.7) { $i $};
 \node at (-.2,1.5) { $\lambda $};
\end{tikzpicture}}\; t
\; + \; \dots \; + \;
\hackcenter{ \begin{tikzpicture} [scale=.6]
 \draw  (-.75,1) arc (360:180:.45cm) [thick];
 \draw[->](-.75,1) arc (0:180:.45cm) [thick];
     \filldraw  [black] (-1.55,.75) circle (2.5pt);
        \node at (-1.3,.3) { $\scriptstyle -\l_i -1 + s$};
        \node at (-1.4,1.7) { $i $};
 \node at (-.2,1.5) { $\lambda $};
\end{tikzpicture}}\; t^s
\; + \; \dots
  \right)
  = \textcolor[rgb]{1.00,0.00,0.00}{-\frac{1}{\beta_{i,\l}}}.
\end{align}

\item The $\mf{sl}_2$ relations (which we also refer to as the $\sE \sF$ and $\sF \sE$ decompositions) are:
\begin{equation}
\begin{split}
\hackcenter{\begin{tikzpicture}[scale=0.8]
    \draw[thick, ->] (0,0) -- (0,2);
    \draw[thick, <-] (.75,0) -- (.75,2);
     \node at (-.2,.2) {\tiny $i$};
    \node at (.95,.2) {\tiny $i$};
 \node at (1.1,1.44) { $\lambda $};
\end{tikzpicture}}
\;\; = \;\; \textcolor[rgb]{1.00,0.00,0.00}{\beta_{i,\l}} \;
 \hackcenter{\begin{tikzpicture}[scale=0.8]
    \draw[thick] (0,0) .. controls ++(0,.5) and ++(0,-.5) .. (.75,1);
    \draw[thick,<-] (.75,0) .. controls ++(0,.5) and ++(0,-.5) .. (0,1);
    \draw[thick] (0,1 ) .. controls ++(0,.5) and ++(0,-.5) .. (.75,2);
    \draw[thick, ->] (.75,1) .. controls ++(0,.5) and ++(0,-.5) .. (0,2);
        \node at (-.2,.15) {\tiny $i$};
    \node at (.95,.15) {\tiny $i$};
\node at (1.1,1.44) { $\lambda $};
\end{tikzpicture}}
\;\; - \;\; \textcolor[rgb]{1.00,0.00,0.00}{\beta_{i,\l}} \;
\sum_{\overset{f_1+f_2+f_3}{=\l_i-1}}\hackcenter{
 \begin{tikzpicture}[scale=0.8]
 \draw[thick,->] (0,-1.0) .. controls ++(0,.5) and ++ (0,.5) .. (.8,-1.0) node[pos=.75, shape=coordinate](DOT1){};
  \draw[thick,<-] (0,1.0) .. controls ++(0,-.5) and ++ (0,-.5) .. (.8,1.0) node[pos=.75, shape=coordinate](DOT3){};
 \draw[thick,->] (0,0) .. controls ++(0,-.45) and ++ (0,-.45) .. (.8,0)node[pos=.25, shape=coordinate](DOT2){};
 \draw[thick] (0,0) .. controls ++(0,.45) and ++ (0,.45) .. (.8,0);
 \draw (-.15,.7) node { $\scs i$};
\draw (1.05,0) node { $\scs i$};
\draw (-.15,-.7) node { $\scs i$};
 \node at (.95,.65) {\tiny $f_3$};
 \node at (-.55,-.05) {\tiny $\overset{-\l_i-1}{+f_2}$};
  \node at (.95,-.65) {\tiny $f_1$};
 \node at (1.55,.3) { $\lambda $};
 \filldraw[thick]  (DOT3) circle (2.5pt);
  \filldraw[thick]  (DOT2) circle (2.5pt);
  \filldraw[thick]  (DOT1) circle (2.5pt);
\end{tikzpicture} }
\\
\hackcenter{\begin{tikzpicture}[scale=0.8]
    \draw[thick, <-] (0,0) -- (0,2);
    \draw[thick, ->] (.75,0) -- (.75,2);
     \node at (-.2,.2) {\tiny $i$};
    \node at (.95,.2) {\tiny $i$};
 \node at (1.1,1.44) { $\lambda $};
\end{tikzpicture}}
\;\; = \;\; \textcolor[rgb]{1.00,0.00,0.00}{\beta_{i,\l}} \;
 \hackcenter{\begin{tikzpicture}[scale=0.8]
    \draw[thick,<-] (0,0) .. controls ++(0,.5) and ++(0,-.5) .. (.75,1);
    \draw[thick] (.75,0) .. controls ++(0,.5) and ++(0,-.5) .. (0,1);
    \draw[thick, ->] (0,1 ) .. controls ++(0,.5) and ++(0,-.5) .. (.75,2);
    \draw[thick] (.75,1) .. controls ++(0,.5) and ++(0,-.5) .. (0,2);
        \node at (-.2,.15) {\tiny $i$};
    \node at (.95,.15) {\tiny $i$};
 \node at (1.1,1.44) { $\lambda $};
\end{tikzpicture}}
\;\; - \;\; \textcolor[rgb]{1.00,0.00,0.00}{\beta_{i,\l}} \;
\sum_{\overset{f_1+f_2+f_3}{=-\l_i-1}}\hackcenter{
 \begin{tikzpicture}[scale=0.8]
 \draw[thick,<-] (0,-1.0) .. controls ++(0,.5) and ++ (0,.5) .. (.8,-1.0) node[pos=.75, shape=coordinate](DOT1){};
  \draw[thick,->] (0,1.0) .. controls ++(0,-.5) and ++ (0,-.5) .. (.8,1.0) node[pos=.75, shape=coordinate](DOT3){};
 \draw[thick ] (0,0) .. controls ++(0,-.45) and ++ (0,-.45) .. (.8,0)node[pos=.25, shape=coordinate](DOT2){};
 \draw[thick, ->] (0,0) .. controls ++(0,.45) and ++ (0,.45) .. (.8,0);
 \draw (-.15,.7) node { $\scs i$};
\draw (1.05,0) node { $\scs i$};
\draw (-.15,-.7) node { $\scs i$};
 \node at (.95,.65) {\tiny $f_3$};
 \node at (-.55,-.05) {\tiny $\overset{\l_i-1}{+f_2}$};
  \node at (.95,-.65) {\tiny $f_1$};
 \node at (1.55,.3) { $\lambda $};
 \filldraw[thick]  (DOT3) circle (2.5pt);
  \filldraw[thick]  (DOT2) circle (2.5pt);
  \filldraw[thick]  (DOT1) circle (2.5pt);
\end{tikzpicture} }  \label{eq:modEF}
\end{split}
\end{equation}
\end{enumerate}
\end{definition}

It is sometimes convenient to use a shorthand notation for the bubbles that emphasizes their degrees.
\begin{equation}
\label{starnotation}
\hackcenter{ \begin{tikzpicture} [scale=.8]
 \draw  (-.75,1) arc (360:180:.45cm) [thick];
 \draw[<-](-.75,1) arc (0:180:.45cm) [thick];
     \filldraw  [black] (-1.55,.75) circle (2.5pt);
        \node at (-1.3,.3) { $\scriptstyle \ast + r$};
        \node at (-1.4,1.7) { $i $};
 \node at (-.2,1.5) { $\lambda $};
\end{tikzpicture}}
\;\; := \;\;
\hackcenter{ \begin{tikzpicture} [scale=.8]
 \draw  (-.75,1) arc (360:180:.45cm) [thick];
 \draw[<-](-.75,1) arc (0:180:.45cm) [thick];
     \filldraw  [black] (-1.55,.75) circle (2.5pt);
        \node at (-1.3,.3) { $\scriptstyle \l_i -1 + r$};
        \node at (-1.4,1.7) { $i $};
 \node at (-.2,1.5) { $\lambda $};
\end{tikzpicture}}
\qquad
\quad
\hackcenter{ \begin{tikzpicture} [scale=.8]
 \draw  (-.75,1) arc (360:180:.45cm) [thick];
 \draw[->](-.75,1) arc (0:180:.45cm) [thick];
     \filldraw  [black] (-1.55,.75) circle (2.5pt);
        \node at (-1.3,.3) { $\scriptstyle \ast + r$};
        \node at (-1.4,1.7) { $i $};
 \node at (-.2,1.5) { $\lambda $};
\end{tikzpicture}}
\;\; := \;\;
\hackcenter{ \begin{tikzpicture} [scale=.8]
 \draw  (-.75,1) arc (360:180:.45cm) [thick];
 \draw[->](-.75,1) arc (0:180:.45cm) [thick];
     \filldraw  [black] (-1.55,.75) circle (2.5pt);
        \node at (-1.3,.3) { $\scriptstyle -\l_i -1 + r$};
        \node at (-1.4,1.7) { $i $};
 \node at (-.2,1.5) { $\lambda $};
\end{tikzpicture}}
\end{equation}

\subsection{The usual cyclic form of the 2-category }

\begin{definition}
The  cyclic 2-category $\cal{U}_{Q}^{cyc}(\mf{g})$ defined in \cite{BHLW2} is a specialization of $\cal{U}_{Q,\beta}(\mf{g})$ corresponding to taking $\beta_{i,\l}=-1$ for all $i \in I$ and $\l \in X$.  In that case, $\sfc_{i,\l}^{-}=(\sfc_{i,\l}^{+})^{-1}$ and we will use the notation $c_{i,\l}:= \sfc_{i,\l}^+$, so that $c_{i,\l}^{-1} = \sfc_{i,\l}^-$.   Since $\beta_{i,\l}=-1$, it follows that the coefficients $c_{i,\l}$ are constant along $\mf{sl}_2$ strings $c_{i,\l} = c_{i,\l \pm k \alpha_i}$ for all $k$.
\end{definition}

\begin{remark}
  The original version of the 2-category $\cal{U}(\mf{g})$ from \cite{KL3,CLau} is not a specialization of $\cal{U}_{Q,\beta}(\mf{g})$ (outside of $\mf{sl}_2$) since these 2-categories are all cyclic.  However, \cite[Theorem 2.1]{BHLW2} gives an explicit isomorphism of categories $\cal{M}\maps \cal{U}_Q^{cyc}(\mf{g}) \to \cal{U}(\mf{g})$ given by
\begin{align}
   \cal{M}\left(\;\;
\hackcenter{\begin{tikzpicture}[scale=0.8]
    \draw[thick, <-] (.75,-2) .. controls ++(0,.75) and ++(0,.75) .. (0,-2);
    \node at (.4,-1.2) {\tiny $\lambda$};
    \node at (-.2,-1.9) {\tiny $i$};
\end{tikzpicture}}
\;\;\right)
\;\;  = \;\; c_{i,\l}
    \hackcenter{\begin{tikzpicture}[scale=0.8, blue]
    \draw[thick, <-] (.75,-2) .. controls ++(0,.75) and ++(0,.75) .. (0,-2);
    \node at (.4,-1.2) {\tiny $\lambda$};
    \node at (-.2,-1.9) {\tiny $i$};
\end{tikzpicture}}
\qquad \qquad
\cal{M} \left(
\;\; \hackcenter{\begin{tikzpicture}[scale=0.8]
    \draw[thick, <-] (.75,2) .. controls ++(0,-.75) and ++(0,-.75) .. (0,2);
    \node at (.4,1.2) {\tiny $\lambda$};
    \node at (-.2,1.9) {\tiny $i$};
\end{tikzpicture}}
\;\; \right)
\;\;  = \;\; c_{i,\l}^{-1}
    \hackcenter{\begin{tikzpicture}[scale=0.8, blue]
    \draw[thick, <-] (.75,2) .. controls ++(0,-.75) and ++(0,-.75) .. (0,2);
    \node at (.4,1.2) {\tiny $\lambda$};
    \node at (-.2,1.9) {\tiny $i$};
\end{tikzpicture}}
\end{align}
so that a sideways crossing is rescaled by
\begin{equation}
 \cal{M} \left(  \hackcenter{\begin{tikzpicture}[scale=0.8]
    \draw[thick, ->] (0,0) .. controls (0,.5) and (.75,.5) .. (.75,1.0);
    \draw[thick, <-] (.75,0) .. controls (.75,.5) and (0,.5) .. (0,1.0);
    \node at (1.1,.55) {\tiny $\lambda$};
    \node at (-.2,.1) {\tiny $j$};
    \node at (.95,.1) {\tiny $i$};
\end{tikzpicture}} \right)
\;\;   = \;\; t_{ij}^{-1}
   \hackcenter{\begin{tikzpicture}[scale=0.8, blue]
    \draw[thick, ->] (0,0) .. controls (0,.5) and (.75,.5) .. (.75,1.0);
    \draw[thick, <-] (.75,0) .. controls (.75,.5) and (0,.5) .. (0,1.0);
    \node at (1.1,.55) {\tiny $\lambda$};
    \node at (-.2,.1) {\tiny $j$};
    \node at (.95,.1) {\tiny $i$};
\end{tikzpicture}}
\end{equation}
\end{remark}

\subsection{The $\mf{gl}(n)$-version of the 2-category }

The weight lattice of $U_q(\mf{gl}_n)$ is given by tuples $\l = (\l_1, \dots, \l_n)\in \Z^n$.  Such a weight determines an $\mf{sl}_n$ weight $\bar{\l}=(\bar{\l}_1, \dots, \bar{\l}_{n-1})\in \Z^{n-1}$ such that $\bar{\l}_k := \l_k -\l_{k+1}$. Let $\epsilon_i=(0,\dots,0,1,0,\dots,0) \in \Z^n$ with a single 1 in the $i$th position. Let $\alpha_i = \epsilon_i - \epsilon_{i+1}\in \Z^n$ for $1 \leq i \leq n-1$.

Here we will be interested in starting from a $\mf{sl}_n$ weight $\mu = (\mu_1, \dots, \mu_{n-1})$ and producing a $\mf{gl}_n$ weight, though such an assignment is not unique.  For $d \in \Z$ define a map
\begin{equation}
  \phi_{n,d} \maps \Z^{n-1} \longrightarrow \Z^{n} \cup \{ \ast \}
\end{equation}
by sending $\phi_{n,d}(\mu) = \lambda$, where $\l$ uniquely satisfies
\begin{align}
  \l_i - \l_{i+1} = \mu_i
\\
\sum_{i=1}^n \l_i = d
\end{align}
if such a solution exists and $\phi_{n,d}(\mu) = \ast$ otherwise.
Write $(\Z^n)_d$ for the set of weights $\l \in \Z^n$ such that $\sum_{i} \lambda_i = d$.  Let $\dot{U}_q(\mf{gl}_n)$ denote Lusztig's idempotent form of the quantum group $U_q(\mf{gl}_n)$ and set
$\dot{U}_q(\mf{gl}_n)^d$ to be the subalgebra of $\dot{U}_q(\mf{gl}_n)$ spanned weights $\l \in (\Z^n)_d$.

For a given $\mu$, its image $\phi_{n,d}(\mu)$ has a solution if and only if
\[
d \equiv \sum_{i=1}^{n-1} i \mu_{i} \quad \mod n.
\]
In particular, for a fixed $d\in \Z$, then exactly one  of the maps $\phi_{n,d+k}$ will have a solution for $k=0,1,\dots, n-1$.
Hence, for all $d\in \Z$ there is an isomorphism\footnote{Thanks to Hoel Queffelec for pointing out this fact. }
\begin{equation} \label{eq:sln2gln}
  \dot{U}_q(\mf{sl}_n) \cong  \bigoplus_{k=d}^{d+n-1}\dot{U}_q(\mf{gl}_n)^k
\end{equation}
determined on weights by sending $\mu$ to $\phi_{n,d+k}(\mu)$ where $d+k \equiv \sum_{i=1}^{n-1} i \mu_{i} \mod n$, see for example \cite{MSV} or \cite[Remark 2.2]{QRS-annular} and the references therein.

\begin{definition}
Fix a compatible choice of scalars and bubble coefficients $Q$ and $\beta$.   The 2-category $\cal{U}_{Q,\beta}(\mf{gl}_n)$ is the graded additive 2-category with
\begin{itemize}
\item \textbf{Objects} objects $\lambda  \in \Z^n$,
\item \textbf{1-morphisms} are formal direct sums of (shifts of) compositions of
$$\onel, \quad \1_{\lambda+\alpha_i} \sE_i= \1_{\lambda+\alpha_i} \sE_i\onel, \quad \text{ and }\quad
\1_{\lambda-\alpha_i} \sF_i= \1_{\lambda-\alpha_i} \sF_i\onel$$
for $1 \leq i \leq n-1$ and $\lambda \in \Z^{n}$.  We denote the grading shift by $\la 1 \ra$, so that for each 1-morphism $x$ in $\cal{U}$ and $t\in \Z$ we a 1-morphism $x\la t\ra$.

\item \textbf{2-morphisms} the same as those in the 2-category $\cal{U}_{Q,\beta}(\mf{sl}_n)$
\end{itemize}
and relations identical to those in $\cal{U}_{Q,\beta}(\mf{sl}_n)$ with each $\l_i := \la i,\l\ra$ for $\l \in \Z^{n-1}$ replaced by $\bar{\l}_i$.
\end{definition}

Then the isomorphism \eqref{eq:sln2gln} motivates the following 2-functors defined for each $d\in \Z$
\begin{align}
 \phi_{n,d} \maps \cal{U}_{Q,\beta}(\mf{sl}_n) & \longrightarrow \cal{U}_{Q,\beta}(\mf{gl}_n)
 \\
 \mu & \mapsto \phi_{n,d}(\mu)
 \nn \\
 \cal{E}_i \1_{\mu} & \mapsto \cal{E}_i \1_{\phi_{n,d}(\mu)} \nn
\end{align}
which send the generating 1-morphisms and 2-morphisms of $\cal{U}_{Q,\beta}(\mf{sl}_n)$ to the corresponding ones in $\cal{U}_{Q,\beta}(\mf{gl}_n)$.
Using $\mf{gl}_n$ weights it is possible to specify an explicit choice of compatible scalars and bubble coefficients.

\begin{definition}
The 2-category $\cal{U}(\mf{gl}_n)$ defined in \cite{MSV} corresponds to the  choice of scalars
\[
t_{ij} =
\left\{
  \begin{array}{ll}
      -1 & \hbox{if $i=j$} \\
     -1 & \hbox{ if $i \cdot j =-1$ and $i \longrightarrow j$} \\
      1 & \hbox{ otherwise, }
  \end{array}
\right.
\]
(so that $\beta_{i,\l} = -t_{ii}=1$) and bubble parameters
\[
\sfc_{i,\l}^+ = (-1)^{\l_{i+1}} \quad \text{for $\bar{\l}_i \geq 0$},
\qquad
\sfc_{i,\l}^- = (-1)^{\l_{i+1}-1} \quad \text{for $\bar{\l}_i \leq 0$}.
\]
\end{definition}

Hence, the \cite{MSV} 2-category $\cal{U}(\mf{gl}_n)$  is a specialization of $\cal{U}_{Q,\beta}(\mf{gl}_n)$.

%
\section{Isomorphism of 2-categories}
%

To aid the reader, our convention throughout this section is to indicate diagrams in the image of a 2-isomorphism in \textcolor[rgb]{0.00,0.07,1.00}{blue}.

\subsection{Cyclic to general 2-isomorphisms}

Recall that the cyclic 2-category $\cal{U}_{Q}^{cyc}(\mf{g})$ is a specialization of $\cal{U}_{Q,\beta}(\mf{g})$ corresponding to taking $\beta_{i,\l}=-1$ for all $i \in I$ and $\l \in X$.   We denote the bubble parameters in $\cal{U}_{Q}^{cyc}(\mf{g})$ using the notation $c_{i,\l}:= \sfc_{i,\l}^+$ and $c_{i,\l}^{-1} = \sfc_{i,\l}^-$, since for this choice of $\beta_{i,\l}$ we have $\sfc_{i,\l}^{-}=(\sfc_{i,\l}^{+})^{-1}$.

\begin{theorem} \label{thm:beta}
There is a 2-isomorphism of graded additive $\Bbbk$-linear categories
\begin{align}
  \cal{U}_Q^{cyc}(\mf{g}) \longrightarrow \cal{U}_{Q,\beta}(\mf{g})
\end{align}
which acts as the identity on objects and 1-morphisms and rescales generating 2-morphisms as follows
\begin{align}
\digamma \left(\;\;
\hackcenter{
\begin{tikzpicture}[scale=0.8]
    \draw[thick, ->] (0,0) -- (0,1.5)
        node[pos=.5, shape=coordinate](DOT){};
    \filldraw  (DOT) circle (2.5pt);
    \node at (.5,.85) {\tiny $\lambda$};
    \node at (-.2,.1) {\tiny $i$};
\end{tikzpicture}}   \right)
\;\;  = \;\;
\hackcenter{
\begin{tikzpicture}[scale=0.8, blue]
    \draw[thick, ->] (0,0) -- (0,1.5)
        node[pos=.5, shape=coordinate](DOT){};
    \filldraw  (DOT) circle (2.5pt);
    \node at (.5,.85) {\tiny $\lambda$};
    \node at (-.2,.1) {\tiny $i$};
\end{tikzpicture}}
   \qquad \qquad
\digamma \left(  \hackcenter{\begin{tikzpicture}[scale=0.8]
    \draw[thick, ->] (0,0) .. controls (0,.5) and (.75,.5) .. (.75,1.0);
    \draw[thick, ->] (.75,0) .. controls (.75,.5) and (0,.5) .. (0,1.0);
    \node at (1.1,.55) {\tiny $\lambda$};
    \node at (-.2,.1) {\tiny $i$};
    \node at (.95,.1) {\tiny $j$};
\end{tikzpicture}} \right)
\;\;   = \;\;
   \hackcenter{\begin{tikzpicture}[scale=0.8, blue]
    \draw[thick, ->] (0,0) .. controls (0,.5) and (.75,.5) .. (.75,1.0);
    \draw[thick, ->] (.75,0) .. controls (.75,.5) and (0,.5) .. (0,1.0);
    \node at (1.1,.55) {\tiny $\lambda$};
    \node at (-.2,.1) {\tiny $i$};
    \node at (.95,.1) {\tiny $j$};
\end{tikzpicture}}
   \nn
\end{align}
\begin{align} \label{eq:digamma-rcap}
    \digamma \left(\;\;
\hackcenter{\begin{tikzpicture}[scale=0.8]
    \draw[thick, <-] (.75,-2) .. controls ++(0,.75) and ++(0,.75) .. (0,-2);
    \node at (.4,-1.2) {\tiny $\lambda$};
    \node at (-.2,-1.9) {\tiny $i$};
\end{tikzpicture}}
\;\;\right)
\;\; & = \;\;
\left\{
  \begin{array}{rl} \frac{c_{i,\l}}{\sfc_{i,\l}^+}
    \hackcenter{\begin{tikzpicture}[scale=0.8, blue]
    \draw[thick, <-] (.75,-2) .. controls ++(0,.75) and ++(0,.75) .. (0,-2);
    \node at (.4,-1.2) {\tiny $\lambda$};
    \node at (-.2,-1.9) {\tiny $i$};
\end{tikzpicture}} & \hbox{ if  $\l_i\equiv_4 0$ or $\l_i\equiv_4 1$ }
\\
    \scs c_{i,\l}\;
    \hackcenter{\begin{tikzpicture}[scale=0.8, blue]
    \draw[thick, <-] (.75,-2) .. controls ++(0,.75) and ++(0,.75) .. (0,-2);
    \node at (.4,-1.2) {\tiny $\lambda$};
    \node at (-.2,-1.9) {\tiny $i$};
\end{tikzpicture}} & \hbox{ otherwise.}
  \end{array}
\right.
   \\
 \digamma \left(\;\;
\hackcenter{\begin{tikzpicture}[scale=0.8]
    \draw[thick, ->] (.75,-2) .. controls ++(0,.75) and ++(0,.75) .. (0,-2);
    \node at (.4,-1.2) {\tiny $\lambda$};
    \node at (.95,-1.9) {\tiny $i$};
\end{tikzpicture}}
\;\;\right)
\;\; & = \;\;
\left\{
  \begin{array}{rl}
    \frac{1}{\sfc_{i,\l}^-}\;\;
    \hackcenter{\begin{tikzpicture}[scale=0.8, blue]
    \draw[thick, ->] (.75,-2) .. controls ++(0,.75) and ++(0,.75) .. (0,-2);
    \node at (.4,-1.2) {\tiny $\lambda$};
    \node at (.95,-1.9) {\tiny $i$};
\end{tikzpicture}} & \hbox{ if   $\l_i\equiv_4 0$ or $\l_i\equiv_4 1$ } \\
    \hackcenter{\begin{tikzpicture}[scale=0.8, blue]
    \draw[thick, ->] (.75,-2) .. controls ++(0,.75) and ++(0,.75) .. (0,-2);
    \node at (.4,-1.2) {\tiny $\lambda$};
    \node at (.95,-1.9) {\tiny $i$};
\end{tikzpicture}} & \hbox{ otherwise.}
  \end{array}
\right.
  \\
\digamma \left(\;\;
\hackcenter{\begin{tikzpicture}[scale=0.8]
    \draw[thick, ->] (.75,2) .. controls ++(0,-.75) and ++(0,-.75) .. (0,2);
    \node at (.4,1.2) {\tiny $\lambda$};
    \node at (.95,1.9) {\tiny $i$};
\end{tikzpicture}}
\;\;\right)
\;\; & = \;\;
\left\{
  \begin{array}{rl}
    \frac{1}{\sfc_{i,\l}^+}\;\;
    \hackcenter{\begin{tikzpicture}[scale=0.8, blue]
    \draw[thick, ->] (.75,2) .. controls ++(0,-.75) and ++(0,-.75) .. (0,2);
    \node at (.4,1.2) {\tiny $\lambda$};
    \node at (.95,1.9) {\tiny $i$};
\end{tikzpicture}}   & \hbox{if    $\l_i\equiv_4 2$ or $\l_i\equiv_4 3$} \\
    \hackcenter{\begin{tikzpicture}[scale=0.8, blue]
    \draw[thick, ->] (.75,2) .. controls ++(0,-.75) and ++(0,-.75) .. (0,2);
    \node at (.4,1.2) {\tiny $\lambda$};
    \node at (.95,1.9) {\tiny $i$};
\end{tikzpicture}}   & \hbox{  otherwise}
  \end{array}
\right.
  \\
\digamma \left(
\;\; \hackcenter{\begin{tikzpicture}[scale=0.8]
    \draw[thick, <-] (.75,2) .. controls ++(0,-.75) and ++(0,-.75) .. (0,2);
    \node at (.4,1.2) {\tiny $\lambda$};
    \node at (-.2,1.9) {\tiny $i$};
\end{tikzpicture}}
\;\; \right)
\;\; & = \;\;
\left\{
  \begin{array}{rl}
    \frac{c_{i,\l}^{-1}}{\sfc_{i,\l}^-}
    \hackcenter{\begin{tikzpicture}[scale=0.8, blue]
    \draw[thick, <-] (.75,2) .. controls ++(0,-.75) and ++(0,-.75) .. (0,2);
    \node at (.4,1.2) {\tiny $\lambda$};
    \node at (-.2,1.9) {\tiny $i$};
\end{tikzpicture}} & \hbox{if  $\l_i\equiv_4 2$ or $\l_i\equiv_4 3$} \\
    \scs c_{i,\l}^{-1} \;
    \hackcenter{\begin{tikzpicture}[scale=0.8, blue]
    \draw[thick, <-] (.75,2) .. controls ++(0,-.75) and ++(0,-.75) .. (0,2);
    \node at (.4,1.2) {\tiny $\lambda$};
    \node at (-.2,1.9) {\tiny $i$};
\end{tikzpicture}} & \hbox{ otherwise}
  \end{array}
\right.
\nn
\end{align}
\end{theorem}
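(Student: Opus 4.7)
The plan is to extend the given data on generating 2-morphisms to a strict 2-functor $\digamma$ by sending a diagram to the corresponding diagram in $\cal{U}_{Q,\beta}(\mf{g})$ multiplied by the product of all scaling factors assigned to its generators, and then verify that this rescaling respects each defining relation of $\cal{U}_Q^{cyc}(\mf{g})$. Since every generator is sent to a nonzero scalar multiple of itself, an inverse 2-functor is constructed by taking the pointwise reciprocal of every scalar. Thus it suffices to establish well-definedness in one direction.

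The checks break up as follows. The biadjunction relations (\ref{eq_biadjoint1-cyc})--(\ref{eq_biadjoint2-cyc}) are verified first, since they motivate the four-case split. In each "zig-zag" that is set to the identity of a strand in weight $\l$, a cup in weight $\l$ is composed with a cap in weight $\l\pm\alpha_i$; because $\l_i$ changes by $\pm 2$ when a weight is shifted by $\alpha_i$, the case split on $\l_i\bmod 4$ is arranged exactly so that the two scaling factors entering each biadjunction equation are reciprocals, guaranteeing that the zig-zag axioms transport. The sideways crossings \eqref{eq_crossl-gen-cyc} are then \emph{defined} in both 2-categories as the same rotation of an upward crossing through the (newly rescaled) cups and caps, so cyclicity of crossings and dots follows from biadjunction together with the fact that dots and upward crossings are not rescaled. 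The quadratic, dot-sliding, and triple-crossing KLR relations are untouched, and the mixed $\cal{E}_i\cal{F}_j$ relations \eqref{mixed_rel-cyc} are cyclic consequences of the already-established biadjunctions; nothing further is required for them.

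Next the bubble axioms are handled. A counterclockwise degree-zero bubble factors as a left cup, a stack of dots, and a left cap in weight $\l$, and takes the value $c_{i,\l}$ in $\cal{U}_Q^{cyc}(\mf{g})$; combining the scaling factors in \eqref{eq:digamma-rcap} in each of the four cases gives the value $\sfc_{i,\l}^+$ in $\cal{U}_{Q,\beta}(\mf{g})$, and likewise $c_{i,\l}^{-1}\mapsto \sfc_{i,\l}^-$ for the clockwise bubble. The compatibility condition \eqref{eq:ccinv} then guarantees that the infinite Grassmannian product $-1/\beta_{i,\l}$ on the target matches the product $\sfc_{i,\l}^+\sfc_{i,\l}^-$ produced on the source (which in $\cal{U}_Q^{cyc}$ equals $1$), so the definitions of fake bubbles on the two sides correspond under $\digamma$ by a routine induction on the number of dots.

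The main obstacle is the $\mf{sl}_2$ decomposition \eqref{eq:modEF}. The left-hand side is a plain upward crossing, carrying scale $1$, while the right-hand side is built from two sideways crossings (or a curl plus a triple-bubble term), whose image under $\digamma$ acquires a product of cup/cap rescalings. One must compute this product carefully: the weight $\l$ attached to the caps/cups in the curl differs from the weight attached to the input crossing by $\alpha_i$, and the four-case rule combined with the identity $\sfc_{i,\l+\alpha_i}^\pm = t_{ii}\sfc_{i,\l}^\pm = -\beta_{i,\l}\sfc_{i,\l}^\pm$ forces the net extra scalar to be exactly $\beta_{i,\l}$, in agreement with \eqref{eq:modEF}. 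The degree-shift on the triple-bubble term must also be checked to give the same $\beta_{i,\l}$ prefactor; here the inductive formula defining fake bubbles in $\cal{U}_{Q,\beta}$ contributes the extra factor of $-1/\beta_{i,\l}$ that cancels against the prefactor coming from the Grassmannian convention, leaving a single explicit $\beta_{i,\l}$ in front of the sum. This $\mf{sl}_2$ verification is the only step that genuinely uses the mod-$4$ dependence of the rescaling, and is the delicate core of the theorem; the remaining relations follow from the structural observations above.
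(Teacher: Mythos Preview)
Your outline is correct in spirit, but two of your dismissals hide the real work.

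Crossing cyclicity \eqref{eq_cyclic} does \emph{not} follow from biadjunction. The relation asserts that the left and right rotations of an upward $ij$-crossing through caps and cups coincide; under $\digamma$ each rotation acquires a product of four cap/cup scalars, involving both colors $i,j$ in weights shifted by $\alpha_i$ and $\alpha_j$, and there is no formal reason these two products should agree. The paper verifies this by a genuine case analysis on $(\l_i\bmod 4,\;\l_j\bmod 4)$: for instance when $\l_i\equiv_4 2$ and $\l_j\equiv_4 1$ both rotations rescale by the common factor $t_{ji}t_{ij}^{-1}/\sfc_{j,\l}^+$. Dot cyclicity \eqref{eq_cyclic_dot-cyc} likewise requires the identity $\sfc_{i,\l-\alpha_i}^-=1/\sfc_{i,\l}^+$, which is a consequence of the compatibility axioms \eqref{eq:ccinv}--\eqref{eq:c-change-sl}, not of biadjunction alone.

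The mixed relation \eqref{mixed_rel-cyc} is an independent axiom, not a cyclic consequence of biadjunction, and checking it is exactly the computation you describe for the $\mf{sl}_2$ curl, specialised to $i\neq j$. The double sideways $ij$-crossing rescales by
\[
\frac{c_{i,\l}}{c_{i,\l+\alpha_j-\alpha_i}}\cdot\frac{1}{\sfc_{i,\l+\alpha_j-\alpha_i}^-\,\sfc_{i,\l}^+},
\]
and one must invoke \eqref{eq:c-change-sl} on both factors (the first gives $(t_{ij}^{cyc})^{-1}$, the second gives $t_{ij}$ via $\sfc_{i,\l+\alpha_j-\alpha_i}^-=1/\sfc_{i,\l+\alpha_j}^+=1/(t_{ij}\sfc_{i,\l}^+)$) to see the product is $1$ when $i\neq j$ and $-\beta_{i,\l}$ when $i=j$. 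You carried out the $i=j$ half for the $\mf{sl}_2$ relation but asserted away the $i\neq j$ half; that is precisely where the compatibility of the bubble parameters with $Q$ is actually used.
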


\begin{proof}
This assignment preserves the scalars $Q$, however by definition we have $t_{ii}^{cyc} = -\beta_i^{cyc} = +1$ in $\cal{U}_Q^{cyc}(\mf{g})$ and $t_{ii}=-\beta_i$ in $\cal{U}_{Q,\beta}(\mf{g})$.  Since $\digamma$ does not effect upward oriented dots and crossings this rescaling has no effect on the KLR-relations.

To see that these definitions preserve the adjunction axioms \eqref{eq_biadjoint1-cyc} and \eqref{eq_biadjoint2-cyc} for caps and cups it is helpful to note that by the properties of compatible bubble coefficients we have
\begin{equation} \label{eq:sfc-rel}
\sfc_{i,\l-\alpha_i}^- \refequal{\eqref{eq:c-change-sl}} t_{ii}\sfc_{i,\l}^-= -\beta_{i,\l} \sfc_{i,\l}^-
\refequal{\eqref{eq:ccinv}} -\beta_{i,\l}\left( \frac{1}{-\beta_{i,\l} \sfc_{i,\l}^+} \right) = \frac{1}{\sfc_{i,\l}^+}.
\end{equation}
The preservation of the dot cyclicity relation \eqref{eq_cyclic_dot-cyc} follows.

A careful case by case analysis shows that crossing cyclicity \eqref{eq_cyclic} is preserved.  This verification produces rescalings that depend of the values of $\l_i$ and $\l_j$. For example, if $\l_i \equiv_4 0$ and $\l_j\equiv_4 0$ there is no rescaling factor on either side of \eqref{eq_cyclic}.  However, for $\l_i\equiv_4 2$ and $\l_j\equiv_4 1$ both sides are rescaled by a factor of $t_{ji}t_{ij}^{-1}/\sfc_{j,\l}^+$.

To verify the remaining relations it is helpful to compute the image of some important composite morphisms.
By definition, the images of real bubbles are given by
\begin{align}
\digamma \left(\;\;
\hackcenter{ \begin{tikzpicture} [scale=.8]
 \draw  (-.75,1) arc (360:180:.45cm) [thick];
 \draw[<-](-.75,1) arc (0:180:.45cm) [thick];
     \filldraw  [black] (-1.55,.75) circle (2.5pt);
        \node at (-1.3,.3) { $\scriptstyle \ast + r$};
        \node at (-1.4,1.7) { $i $};
 \node at (-.2,1.5) { $\lambda $};
\end{tikzpicture}}
\;\; \right)
\;\; = \;\; \frac{c_{i,\l}}{\sfc_{i,\l}^+}\;
\hackcenter{ \begin{tikzpicture} [scale=.8, blue]
 \draw  (-.75,1) arc (360:180:.45cm) [thick];
 \draw[<-](-.75,1) arc (0:180:.45cm) [thick];
     \filldraw  [black] (-1.55,.75) circle (2.5pt);
        \node at (-1.3,.3) { $\scriptstyle \ast + r$};
        \node at (-1.4,1.7) { $i $};
 \node at (-.2,1.5) { $\lambda $};
\end{tikzpicture}}
\qquad \qquad
\digamma \left(\;\;
\hackcenter{ \begin{tikzpicture} [scale=.8]
 \draw  (-.75,1) arc (360:180:.45cm) [thick];
 \draw[->](-.75,1) arc (0:180:.45cm) [thick];
     \filldraw  [black] (-1.55,.75) circle (2.5pt);
        \node at (-1.3,.3) { $\scriptstyle \ast + r$};
        \node at (-1.4,1.7) { $i $};
 \node at (-.2,1.5) { $\lambda $};
\end{tikzpicture}}
\;\; \right)
\;\; = \;\; \frac{c_{i,\l}^{-1}}{\sfc_{i,\l}^-}\;
\hackcenter{ \begin{tikzpicture} [scale=.8, blue]
 \draw  (-.75,1) arc (360:180:.45cm) [thick];
 \draw[->](-.75,1) arc (0:180:.45cm) [thick];
     \filldraw  [black] (-1.55,.75) circle (2.5pt);
        \node at (-1.3,.3) { $\scriptstyle \ast + r$};
        \node at (-1.4,1.7) { $i $};
 \node at (-.2,1.5) { $\lambda $};
\end{tikzpicture}}
\end{align}
so it is clear that the degree zero bubble relations are preserved (a clockwise bubble is equal to $c_{i,\l}$ in $\cal{U}_Q^{cyc}(\mf{g})$ and $\sfc_{i,\l}^+$ in $\cal{U}_{Q,\beta}(\mf{g})$).  For fake bubbles it follows by induction using \eqref{eq:fake1} and \eqref{eq:fake2} that 
\begin{align}
\digamma \left(\;\;
\hackcenter{ \begin{tikzpicture} [scale=.8]
 \draw  (-.75,1) arc (360:180:.45cm) [thick];
 \draw[<-](-.75,1) arc (0:180:.45cm) [thick];
     \filldraw  [black] (-1.55,.75) circle (2.5pt);
        \node at (-1.3,.3) { $\scriptstyle \ast + r$};
        \node at (-1.4,1.7) { $i $};
 \node at (-.2,1.5) { $\lambda $};
\end{tikzpicture}}
\;\; \right)
\;\; = \;\; -\beta_{i,\l} \frac{\sfc_{i,\l}^-}{c_{i,\l}^{-1}}\;
\hackcenter{ \begin{tikzpicture} [scale=.8, blue]
 \draw  (-.75,1) arc (360:180:.45cm) [thick];
 \draw[<-](-.75,1) arc (0:180:.45cm) [thick];
     \filldraw  [black] (-1.55,.75) circle (2.5pt);
        \node at (-1.3,.3) { $\scriptstyle \ast + r$};
        \node at (-1.4,1.7) { $i $};
 \node at (-.2,1.5) { $\lambda $};
\end{tikzpicture}}
\qquad \qquad
\digamma \left(\;\;
\hackcenter{ \begin{tikzpicture} [scale=.8]
 \draw  (-.75,1) arc (360:180:.45cm) [thick];
 \draw[->](-.75,1) arc (0:180:.45cm) [thick];
     \filldraw  [black] (-1.55,.75) circle (2.5pt);
        \node at (-1.3,.3) { $\scriptstyle \ast + r$};
        \node at (-1.4,1.7) { $i $};
 \node at (-.2,1.5) { $\lambda $};
\end{tikzpicture}}
\;\; \right)
\;\; = \;\; -\beta_{i,\l}\frac{\sfc_{i,\l}^+}{c_{i,\l}}\;
\hackcenter{ \begin{tikzpicture} [scale=.8, blue]
 \draw  (-.75,1) arc (360:180:.45cm) [thick];
 \draw[->](-.75,1) arc (0:180:.45cm) [thick];
     \filldraw  [black] (-1.55,.75) circle (2.5pt);
        \node at (-1.3,.3) { $\scriptstyle \ast + r$};
        \node at (-1.4,1.7) { $i $};
 \node at (-.2,1.5) { $\lambda $};
\end{tikzpicture}}
\end{align}
Hence, the right most terms in the $EF$-relations \eqref{eq:modEF} rescale as the product of the rescalings for the real bubble together with the oppositely oriented fake bubble.  In both cases this product is $-\beta_{i,\l}$.

One can also show that
\begin{align}
\digamma \left(\;\;
 \hackcenter{\begin{tikzpicture}[scale=0.8]
    \draw[thick,<-] (0,0) .. controls ++(0,.5) and ++(0,-.5) .. (.75,1);
    \draw[thick] (.75,0) .. controls ++(0,.5) and ++(0,-.5) .. (0,1);
    \draw[thick, ->] (0,1 ) .. controls ++(0,.5) and ++(0,-.5) .. (.75,2);
    \draw[thick] (.75,1) .. controls ++(0,.5) and ++(0,-.5) .. (0,2);
        \node at (-.2,.15) {\tiny $i$};
    \node at (.95,.15) {\tiny $j$};
    \node at (1,1.55) {  $\lambda$};
\end{tikzpicture}} \;\; \right)
\;\; = \;\;
\digamma \left( \;\;
\hackcenter{\begin{tikzpicture}[xscale=-1.0, scale=0.7]
    \draw[thick, ->] (0,0) .. controls (0,.5) and (.75,.5) .. (.75,1.0);
    \draw[thick, ->] (.75,-.5) to (.75,0) .. controls (.75,.5) and (0,.5) .. (0,1.0) to (0,1.5);
    \draw[thick] (0,0) .. controls ++(0,-.4) and ++(0,-.4) .. (-.75,0) to (-.75,1.5);
    \draw[thick, ->] (.75,1.0) .. controls ++(0,.4) and ++(0,.4) .. (1.5,1.0) to (1.5,-.5);
    \node at (-1.3,.55) {  $\lambda$};
    \node at (1.75,-.2) {\tiny $i$};
    \node at (.55,-.2) {\tiny $j$};
    \node at (-.9,1.2) {\tiny $i$};
    \node at (.25,1.2) {\tiny $j$};
\node at (.56,2.5) {
    \begin{tikzpicture}[xscale=1, scale=0.7]
    \draw[thick, ->] (0,0) .. controls (0,.5) and (.75,.5) .. (.75,1.0);
    \draw[thick, ->] (.75,-.5) to (.75,0) .. controls (.75,.5) and (0,.5) .. (0,1.0) to (0,1.5);
    \draw[thick,<-] (0,0) .. controls ++(0,-.4) and ++(0,-.4) .. (-.75,0) to (-.75,1.5);
    \draw[thick] (.75,1.0) .. controls ++(0,.4) and ++(0,.4) .. (1.5,1.0) to (1.5,-.5);
    \node at (-.9,1.2) {\tiny $i$};
    \node at (.25,1.2) {\tiny $j$};
    \end{tikzpicture}
    };
\end{tikzpicture}}
\;\; \right)
\;\; = \;\;  c_{i,\l}\cdot c_{i,\l+\alpha_j-\alpha_i}^{-1} \frac{1}{\sfc_{i,\l+\alpha_j -\alpha_i}^-} \frac{1}{\sfc_{i,\l}^+}
 \hackcenter{\begin{tikzpicture}[scale=0.8, blue]
    \draw[thick,<-] (0,0) .. controls ++(0,.5) and ++(0,-.5) .. (.75,1);
    \draw[thick] (.75,0) .. controls ++(0,.5) and ++(0,-.5) .. (0,1);
    \draw[thick, ->] (0,1 ) .. controls ++(0,.5) and ++(0,-.5) .. (.75,2);
    \draw[thick] (.75,1) .. controls ++(0,.5) and ++(0,-.5) .. (0,2);
     \node at (1,1.55) {  $\lambda$};
        \node at (-.2,.15) {\tiny $i$};
    \node at (.95,.15) {\tiny $j$};
\end{tikzpicture}}
\\
\digamma \left(\;\;
 \hackcenter{\begin{tikzpicture}[scale=0.8]
    \draw[thick] (0,0) .. controls ++(0,.5) and ++(0,-.5) .. (.75,1);
    \draw[thick, <-] (.75,0) .. controls ++(0,.5) and ++(0,-.5) .. (0,1);
    \draw[thick] (0,1 ) .. controls ++(0,.5) and ++(0,-.5) .. (.75,2);
    \draw[thick, ->] (.75,1) .. controls ++(0,.5) and ++(0,-.5) .. (0,2);
        \node at (-.2,.15) {\tiny $j$};
    \node at (.95,.15) {\tiny $i$};
     \node at (1,1.55) {  $\lambda$};
\end{tikzpicture}} \;\; \right)
\;\; = \;\;
\digamma \left( \;\;
\hackcenter{\begin{tikzpicture}[scale=0.7]
    \draw[thick, ->] (0,0) .. controls (0,.5) and (.75,.5) .. (.75,1.0);
    \draw[thick, ->] (.75,-.5) to (.75,0) .. controls (.75,.5) and (0,.5) .. (0,1.0) to (0,1.5);
    \draw[thick] (0,0) .. controls ++(0,-.4) and ++(0,-.4) .. (-.75,0) to (-.75,1.5);
    \draw[thick, ->] (.75,1.0) .. controls ++(0,.4) and ++(0,.4) .. (1.5,1.0) to (1.5,-.5);
    \node at (1.85,.55) {  $\lambda$};
    \node at (1.75,-.2) {\tiny $i$};
    \node at (.55,-.2) {\tiny $j$};
    \node at (-.9,1.2) {\tiny $i$};
    \node at (.25,1.2) {\tiny $j$};
\node at (.56,2.5) {
    \begin{tikzpicture}[xscale=-1, scale=0.7]
    \draw[thick, ->] (0,0) .. controls (0,.5) and (.75,.5) .. (.75,1.0);
    \draw[thick, ->] (.75,-.5) to (.75,0) .. controls (.75,.5) and (0,.5) .. (0,1.0) to (0,1.5);
    \draw[thick,<-] (0,0) .. controls ++(0,-.4) and ++(0,-.4) .. (-.75,0) to (-.75,1.5);
    \draw[thick] (.75,1.0) .. controls ++(0,.4) and ++(0,.4) .. (1.5,1.0) to (1.5,-.5);
    \node at (-.9,1.2) {\tiny $i$};
    \node at (.25,1.2) {\tiny $j$};
    \end{tikzpicture}
    };
\end{tikzpicture}}
\;\; \right)
\;\; = \;\;   c_{i,\l}\cdot c_{i,\l+\alpha_j-\alpha_i}^{-1} \frac{1}{\sfc_{i,\l+\alpha_j -\alpha_i}^-} \frac{1}{\sfc_{i,\l}^+}
 \hackcenter{\begin{tikzpicture}[scale=0.8, blue]
    \draw[thick] (0,0) .. controls ++(0,.5) and ++(0,-.5) .. (.75,1);
    \draw[thick, <-] (.75,0) .. controls ++(0,.5) and ++(0,-.5) .. (0,1);
    \draw[thick] (0,1 ) .. controls ++(0,.5) and ++(0,-.5) .. (.75,2);
    \draw[thick, ->] (.75,1) .. controls ++(0,.5) and ++(0,-.5) .. (0,2);
        \node at (-.2,.15) {\tiny $j$};
    \node at (.95,.15) {\tiny $i$};
     \node at (1,1.55) {  $\lambda$};
\end{tikzpicture}}
\end{align}
where the contribution of the bubble parameters from $\cal{U}_{Q,\beta}(\mf{g})$ is $\frac{1}{\sfc_{i,\l+\alpha_j -\alpha_i}^-} \frac{1}{\sfc_{i,\l}^+}\refequal{\eqref{eq:sfc-rel}} \frac{\sfc_{i,\l+\alpha_j}^+}{\sfc_{i,\l}^+} =t_{ij}$ while the contribution from $\cal{U}_{Q}^{cyc}(\mf{g})$ is
\[
c_{i,\l}\cdot c_{i,\l+\alpha_j-\alpha_i}^{-1} =
 (t_{ij}^{cyc})^{-1}.
\]
In particular, when $i \neq j$ the contributions cancel to give a rescaling coefficient of $1$ and when $i=j$ only the $t_{ii} = -\beta_{i,\l}$ contributes.    Hence, the map $\digamma$ preserves the mixed relation \eqref{mixed_rel-cyc} and sends the usual $\mf{sl}_2$ relation to the $\beta_{i,\l}$ modified form in \eqref{eq:modEF}.
\end{proof}

\subsection{Rescaling nilHecke subalgebras} \label{sec:nilhecke}

The nilHecke algebra remains invariant if we rescale a dot by an arbitrary invertible parameter along with the crossing by the inverse of this parameter. Here we record how the 2-category $\cal{U}_{Q,\beta}(\mf{g})$ is effected by this transformation.

\begin{proposition} \label{prop:nilparam}
Given bubble parameters $\beta$ with compatible choice of scalars $Q$, and $D_i \in \Bbbk^{\times}$ for $i \in I$,  define bubble coefficients $\hat{\beta}$ with
\begin{equation} \label{eq:nil-bub}
\hat{\sfc}_{i,\l}^+ := D_i^{-\l_i+1} \sfc_{i,\l}^+ ,
\qquad
\hat{\sfc}_{i,\l}^- := D_i^{\l_i+1}\sfc_{i,\l}^-,
\qquad
 \hat{\beta}_{i,\l} := D_{i}^{-2}\beta_{i,\l}.
\end{equation}
and scalars $\hat{Q}$ with
\begin{equation}
\begin{split}
\hat{t}_{ii} &= -\hat{\beta}_{i,\l} = - D_i^{-2} \beta_{i,\l} = D_i^{-2} t_{ii}
\\
\hat{t}_{ij} &= D_i t_{ij} \qquad \text{if $(\alpha_i, \alpha_j) =-1$, }
\\
\hat{t}_{ik} &=   t_{ik} \qquad \text{if $(\alpha_i, \alpha_k) = 0$. }
\end{split}
\end{equation}
Then $\hat{Q}$ and $\hat{\beta}$ define bubble parameters with a compatible choice of scalars.
\end{proposition}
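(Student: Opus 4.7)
The plan is straightforward: verify the two compatibility axioms \eqref{eq:ccinv} and \eqref{eq:c-change-sl} directly for the hatted parameters, using the corresponding axioms for $\beta$ and $Q$. There is no subtle step, only a small case analysis.

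First I would verify \eqref{eq:ccinv}. Compute
\begin{equation*}
\hat{\sfc}_{i,\lambda}^+ \hat{\sfc}_{i,\lambda}^- \;=\; D_i^{-\lambda_i+1} D_i^{\lambda_i+1}\, \sfc_{i,\lambda}^+\sfc_{i,\lambda}^- \;=\; D_i^{2}\Bigl(-\tfrac{1}{\beta_{i,\lambda}}\Bigr) \;=\; -\tfrac{1}{D_i^{-2}\beta_{i,\lambda}} \;=\; -\tfrac{1}{\hat{\beta}_{i,\lambda}} \;=\; \tfrac{1}{\hat{t}_{ii}},
\end{equation*}
which shows the powers of $D_i$ absorb precisely into the definition of $\hat{\beta}_{i,\lambda}$ and $\hat{t}_{ii}$.

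Next I would verify the shift compatibility \eqref{eq:c-change-sl} in three cases according to the value of $(\alpha_i,\alpha_j)$. Writing $(\lambda \pm \alpha_j)_i = \lambda_i \pm (\alpha_i,\alpha_j)$, the key observation is that the exponent of $D_i$ in $\hat{\sfc}^{\pm}_{i,\lambda \pm \alpha_j}$ differs from the exponent of $D_i$ in $\hat{\sfc}^{\pm}_{i,\lambda}$ by exactly $\mp (\alpha_i,\alpha_j)$, which matches the prescribed power of $D_i$ in $\hat{t}_{ij}$. Explicitly:

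\begin{itemize}
\item For $j=i$, use $t_{ii} = -\beta_{i,\lambda}$ and $\sfc^{\pm}_{i,\lambda \pm \alpha_i} = t_{ii}\sfc^{\pm}_{i,\lambda}$. Then
\[
\hat{\sfc}^{+}_{i,\lambda+\alpha_i} = D_i^{-\lambda_i-1}t_{ii}\sfc^{+}_{i,\lambda} = (D_i^{-2}t_{ii})(D_i^{-\lambda_i+1}\sfc^{+}_{i,\lambda}) = \hat{t}_{ii}\hat{\sfc}^{+}_{i,\lambda},
\]
and similarly for $\hat{\sfc}^{-}_{i,\lambda-\alpha_i}$.
\item For $(\alpha_i,\alpha_j)=-1$, the exponent of $D_i$ shifts by $\pm 1$, producing exactly one extra factor of $D_i$, which matches $\hat{t}_{ij}=D_i t_{ij}$.
\item For $(\alpha_i,\alpha_k)=0$, the weight shift does not change $\lambda_i$, so the $D_i$-exponents match trivially and $\hat{t}_{ik} = t_{ik}$ suffices.
\end{itemize}

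In each case the verification reduces to matching a single power of $D_i$. There is no real obstacle; the proposition is essentially just bookkeeping to record that the proposed rescalings are internally consistent. The mild subtlety is ensuring that the formula for $\hat{t}_{ii}$ agrees with $-\hat{\beta}_{i,\lambda}$, which is immediate from $\hat{\beta}_{i,\lambda} = D_i^{-2}\beta_{i,\lambda}$ together with $t_{ii} = -\beta_{i,\lambda}$.
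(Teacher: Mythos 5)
Your proof is correct and follows essentially the same route as the paper: a direct check of \eqref{eq:ccinv} (where the $D_i^2$ is absorbed into $\hat{\beta}_{i,\lambda}=D_i^{-2}\beta_{i,\lambda}$) followed by the same three-case verification of \eqref{eq:c-change-sl} according to the value of $(\alpha_i,\alpha_j)$. One tiny quibble: in your summary sentence the exponent of $D_i$ shifts by $-(\alpha_i,\alpha_j)$ for \emph{both} $\hat{\sfc}^+_{i,\lambda+\alpha_j}$ and $\hat{\sfc}^-_{i,\lambda-\alpha_j}$ (not by $\mp(\alpha_i,\alpha_j)$), which is exactly why a single $\hat{t}_{ij}$ works for both signs; your explicit case computations are nonetheless correct.
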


\begin{proof}
One can check that with these assignments we have
\begin{equation}
 \hat{\sfc}_{i,\l}^+ \hat{\sfc}_{i,\l}^- = - \frac{1}{\hat{\beta}_i}
\end{equation}
so that \eqref{eq:ccinv} is satisfied with the new bubble coefficients.
We also have
\begin{align}
 \hat{\sfc}_{i,\l+\alpha_i}^+
&= D_i^{-\l_i-1} \sfc_{i,\l+\alpha_i}^+
= t_{ii} D_i^{-\l_i-1} \sfc_{i,\l}^+
= t_{ii}  D_i^{-2}\; \hat{\sfc}_{i,\l}^+
= \hat{t}_{ii} \hat{\sfc}_{i,\l}^+
\nn
\\
\hat{\sfc}_{i,\l+\alpha_j}^+
&= D_i^{-\l_i+2} \sfc_{i,\l+\alpha_j}^+
= D_i^{-\l_i+2} t_{ij}  \sfc_{i,\l}^+
= D_i t_{ij}  \hat{\sfc}_{i,\l}^+ \qquad \quad \text{if $(\alpha_i, \alpha_j) =-1$}
\nn
\\
\hat{\sfc}_{i,\l+\alpha_k}^+
&= D_i^{-\l_i+1} \sfc_{i,\l+\alpha_k}^+
= D_i^{-\l_i+1} t_{ik}  \sfc_{i,\l}^+
=   t_{ik}  \hat{\sfc}_{i,\l}^+ =   \hat{t}_{ik}  \hat{\sfc}_{i,\l}^+ \qquad \text{if $(\alpha_i, \alpha_k) =0$,} \nn
\end{align}
so that \eqref{eq:c-change-sl} is also preserved. Equations \eqref{eq:c-change-sl} for $\sfc^-$ follows similarly.
\end{proof}

\begin{proposition}
For each $i \in I$ let $D_i \in \Bbbk$.  Then there is an isomorphism of 2-categories
\begin{equation}
 \beth \maps \cal{U}_{Q,\beta}(\mf{g}) \longrightarrow
\cal{U}_{\hat{Q},\hat{\beta}}(\mf{g})
\end{equation}
defined by
\begin{align}
 \beth \left(\;\;
\hackcenter{
\begin{tikzpicture}[scale=0.8]
    \draw[thick, ->] (0,0) -- (0,1.5)
        node[pos=.5, shape=coordinate](DOT){};
    \filldraw  (DOT) circle (2.5pt);
    \node at (.5,.85) {\tiny $\lambda$};
    \node at (-.2,.1) {\tiny $i$};
\end{tikzpicture}}   \right)
\;\;  &= \;\;
   D_i  \;
\hackcenter{
\begin{tikzpicture}[scale=0.8, blue]
    \draw[thick, ->] (0,0) -- (0,1.5)
        node[pos=.5, shape=coordinate](DOT){};
    \filldraw  (DOT) circle (2.5pt);
    \node at (.5,.85) {\tiny $\lambda$};
    \node at (-.2,.1) {\tiny $i$};
\end{tikzpicture}}
\qquad
\beth \left(  \hackcenter{\begin{tikzpicture}[scale=0.8]
    \draw[thick, ->] (0,0) .. controls (0,.5) and (.75,.5) .. (.75,1.0);
    \draw[thick, ->] (.75,0) .. controls (.75,.5) and (0,.5) .. (0,1.0);
    \node at (1.1,.55) {\tiny $\lambda$};
    \node at (-.2,.1) {\tiny $i$};
    \node at (.95,.1) {\tiny $i$};
\end{tikzpicture}} \right)
\;\;   = \;\;
  D_i^{-1}\;
    \hackcenter{\begin{tikzpicture}[scale=0.8, blue]
    \draw[thick, ->] (0,0) .. controls (0,.5) and (.75,.5) .. (.75,1.0);
    \draw[thick, ->] (.75,0) .. controls (.75,.5) and (0,.5) .. (0,1.0);
    \node at (1.1,.55) {\tiny $\lambda$};
    \node at (-.2,.1) {\tiny $i$};
    \node at (.95,.1) {\tiny $i$};
\end{tikzpicture}}
\end{align}
where the coefficients $\hat{Q}$ and $ \hat{\beta}$ are defined in \eqref{prop:nilparam}.
\end{proposition}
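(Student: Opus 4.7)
The strategy is to extend $\beth$ to a graded $\Bbbk$-linear 2-functor by specifying its action on all remaining generators, verify preservation of each defining relation of $\cal{U}_{Q,\beta}(\mf{g})$, and exhibit an inverse. I will extend $\beth$ to act as the identity on every cup, every cap, and every $ij$-crossing with $i \neq j$. Together with the stated rescalings of the $i$-dot by $D_i$ and of the $ii$-crossing by $D_i^{-1}$, this uniquely determines $\beth$ on 2-morphism generators, and it extends by $\Bbbk$-linearity together with horizontal and vertical composition.

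I would then verify the structural relations in order. The biadjunction axioms \eqref{eq_biadjoint1-cyc}--\eqref{eq_biadjoint2-cyc}, the cyclicity relations \eqref{eq_cyclic_dot-cyc}--\eqref{eq_cyclic}, and the mixed $\sE_i\sF_j$ relations \eqref{mixed_rel-cyc} are all preserved immediately, because they involve only cups, caps, and $ij$-crossings with $i \neq j$, each fixed by $\beth$. The nilHecke dot-slide \eqref{eq:nil-dot} is preserved because a dot together with an $ii$-crossing contributes $D_i \cdot D_i^{-1} = 1$. For the KLR quadratic relation \eqref{eq_r2_ij-gen-cyc} with $i \cdot j = -1$, the rescaling $\hat{t}_{ij} = D_i t_{ij}$ on the right-hand side precisely absorbs the factor $D_i$ introduced by the dot; the KLR cubic relation \eqref{eq:KLR-r3} is treated similarly, since each braid diagram contains a single $ii$-crossing providing the balancing factor $D_i^{-1}$.

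For the bubble relations I will use the observation that a CW or CCW bubble carrying $k$ dots rescales under $\beth$ by $D_i^k$, since $\beth$ fixes its cup and cap and multiplies each of its $k$ dots by $D_i$. The degree-zero normalizations in \eqref{eq:degreezero} then follow at once from the identities $\hat{\sfc}^+_{i,\l} = D_i^{-\l_i+1}\sfc^+_{i,\l}$ and $\hat{\sfc}^-_{i,\l} = D_i^{\l_i+1}\sfc^-_{i,\l}$ supplied by Proposition~\ref{prop:nilparam}. For the infinite Grassmannian relation, each product of a CW bubble of dot count $\l_i-1+x$ with a CCW bubble of dot count $-\l_i-1+y$ rescales by $D_i^{-2+x+y}$; at the constant term $x+y=0$ this is $D_i^{-2}$, matching the ratio $(-1/\beta_{i,\l}) : (-1/\hat{\beta}_{i,\l}) = D_i^{-2}$. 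An induction on dot count via the fake-bubble recursion then confirms that fake bubbles also rescale as $D_i$ to the power of their formal dot label.

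The main obstacle is the $\mf{sl}_2$ relation \eqref{eq:modEF}. Its LHS is an identity 2-morphism, fixed by $\beth$. The first term on the RHS carries two $ii$-crossings, contributing a factor $D_i^{-2}$ which combines with $\beta_{i,\l}$ to yield $\hat{\beta}_{i,\l} = D_i^{-2}\beta_{i,\l}$. For each summand of the bubble term, the three dot-label totals satisfy $f_1 + (-\l_i - 1 + f_2) + f_3 = -2$ thanks to the summation constraint $f_1 + f_2 + f_3 = \l_i - 1$, so the three bubbles together also rescale by $D_i^{-2}$, again absorbing into the change $\beta_{i,\l} \mapsto \hat{\beta}_{i,\l}$; the second $\mf{sl}_2$ relation is handled by an analogous count. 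Finally, the 2-functor $\beth^{-1}\maps \cal{U}_{\hat{Q},\hat{\beta}}(\mf{g}) \to \cal{U}_{Q,\beta}(\mf{g})$ defined by the same prescription with $D_i$ replaced by $D_i^{-1}$ provides an inverse, establishing the 2-isomorphism.
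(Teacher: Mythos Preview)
Your proposal is correct and follows essentially the same approach as the paper: fix cups, caps, and off-diagonal crossings, check that KLR and bubble relations transform correctly under the dot/crossing rescaling, and verify the $\mf{sl}_2$ relation by the dot-count identity $f_1+(-\l_i-1+f_2)+f_3=-2$, which is exactly the computation the paper carries out. One small imprecision: the cyclicity relations \eqref{eq_cyclic_dot-cyc}--\eqref{eq_cyclic} do involve a dot (respectively, possibly an $ii$-crossing), so they are not literally built only from generators fixed by $\beth$; however, both sides acquire the same factor $D_i$ (respectively $D_i^{-1}$), so the relations are still preserved for the reason you give elsewhere.
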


\begin{proof}
The rescaling of the $i$-labelled dot and the $ii$-crossing are necessarily inverse because of the nilHecke dot sliding relation \eqref{eq:nil-dot}.  Since caps and cups are untouched, it is immediate that adjunction axioms and cyclicity are preserved.  For this rescaling of the $ii$-crossing it is clear by \eqref{eq_r2_ij-gen-cyc} that the coefficients $t_{ij}$ must map to $D_i^{-1} \hat{t}_{ij}$ and the other KLR relations follow similarly.  Likewise, the definition of bubble parameters \eqref{eq:degreezero} determines $\hat{\sfc}_{i,\l}^+$ and   $\hat{\sfc}_{i,\l}^-$.  The only relation that requires care is the $\mf{sl}_2$-relations \eqref{eq:modEF}.  For example, the first sl2 relation follows since for $\l_i \geq 0$ the degree $2j$ fake bubble is given by
\begin{align}
 \beth\left(\; \hackcenter{ \begin{tikzpicture} [scale=.7]
 \draw (-.15,.35) node { $\scs i$};
 \draw  (0,0) arc (180:360:0.5cm) [thick];
 \draw[,->](1,0) arc (0:180:0.5cm) [thick];
\filldraw  [black] (.1,-.25) circle (2.5pt);
 \node at (-.15,-.75) {\tiny $-\l_i -1+j$};
 \node at (1.15,1) { $\lambda  $};
\end{tikzpicture} } \; \right)
\;\; & := \;\;
D_i^{-\l_i-1+j}
\hackcenter{ \begin{tikzpicture} [scale=.7]
 \draw (-.15,.35) node { $\scs i$};
 \draw[thick, blue]  (0,0) arc (180:360:0.5cm);
 \draw[thick,blue ,->](1,0) arc (0:180:0.5cm);
\filldraw  [black] (.1,-.25) circle (2.5pt);
 \node at (-.15,-.75) {\tiny $-\l_i -1+j$};
 \node at (1.15,1) { $\lambda  $};
\end{tikzpicture} }
\end{align}
which follows by induction  since
\begin{align}
 \beth\left(\; \hackcenter{ \begin{tikzpicture} [scale=.7]
 \draw (-.15,.35) node { $\scs i$};
 \draw  (0,0) arc (180:360:0.5cm) [thick];
 \draw[,->](1,0) arc (0:180:0.5cm) [thick];
\filldraw  [black] (.1,-.25) circle (2.5pt);
 \node at (.15,-.75) {\tiny $-\l_i -1+j$};
 \node at (1.15,1) { $\lambda  $};
\end{tikzpicture} } \; \right)
\; & := \;
    - \frac{1}{  \sfc_{i,\l}^{+}}  \;\;
\beth \left(\;
\displaystyle \sum_{\stackrel{\scs x+y=j}{\scs x\geq 1}}
 \hackcenter{ \begin{tikzpicture}[scale=.7]
 \draw (-.15,.35) node { $\scs i$};
 \draw  (0,0) arc (180:360:0.5cm) [thick];
 \draw[,<-](1,0) arc (0:180:0.5cm) [thick];
\filldraw  [black] (.1,-.25) circle (2.5pt);
 \node at (.25,-.75) {\tiny $ \l_i-1 +x$};
 \node at (.85,1) { $\lambda$};
\end{tikzpicture} \;\;
\begin{tikzpicture}[scale=.7]
 \draw (-.15,.35) node { $\scs i$};
 \draw  (0,0) arc (180:360:0.5cm) [thick];
 \draw[->](1,0) arc (0:180:0.5cm) [thick];
\filldraw  [black] (.9,-.25) circle (2.5pt);
 \node at (.45,-.75) {\tiny $-\l_i-1+y$};
 \node at (1.15,1.1) { $\;$};
\end{tikzpicture}  }\; \right) \nn
\\
& \;  =\;
    - \frac{D_i^{-\l_i+1}  }{ \hat{\sfc}_{i,\l}^{+}}  \; \;\beth \left(\;
\displaystyle \sum_{\stackrel{\scs x+y=j}{\scs x\geq 1}}
 \hackcenter{ \begin{tikzpicture}[scale=.7]
 \draw (-.15,.35) node { $\scs i$};
 \draw  (0,0) arc (180:360:0.5cm) [thick];
 \draw[,<-](1,0) arc (0:180:0.5cm) [thick];
\filldraw  [black] (.1,-.25) circle (2.5pt);
 \node at (.25,-.75) {\tiny $ \l_i-1 +x$};
 \node at (.85,1) { $\lambda$};
\end{tikzpicture} \;\;
\begin{tikzpicture}[scale=.7]
 \draw (-.15,.35) node { $\scs i$};
 \draw  (0,0) arc (180:360:0.5cm) [thick];
 \draw[->](1,0) arc (0:180:0.5cm) [thick];
\filldraw  [black] (.9,-.25) circle (2.5pt);
 \node at (.45,-.75) {\tiny $-\l_i-1+y$};
 \node at (1.15,1.1) { $\;$};
\end{tikzpicture}  }\; \right) \nn
\\
& \;  =\;
    - \frac{D_i^{-\l_i+1}  }{  \hat{\sfc}_{i,\l}^{+}}  \;D_i^{\l_i-1+x} D_i^{-\l_i-1+y}
\displaystyle \sum_{\stackrel{\scs x+y=j}{\scs x\geq 1}}
 \hackcenter{ \begin{tikzpicture}[scale=.7]
 \draw (-.15,.35) node { $\scs i$};
 \draw  (0,0) arc (180:360:0.5cm) [thick, blue];
 \draw[,<-](1,0) arc (0:180:0.5cm) [thick, blue];
\filldraw  [black] (.1,-.25) circle (2.5pt);
 \node at (.25,-.75) {\tiny $ \l_i-1 +x$};
 \node at (.85,1) { $\lambda$};
\end{tikzpicture} \;\;
\begin{tikzpicture}[scale=.7]
 \draw (-.15,.35) node { $\scs i$};
 \draw  (0,0) arc (180:360:0.5cm) [thick, blue];
 \draw[->](1,0) arc (0:180:0.5cm) [thick,blue];
\filldraw  [black] (.9,-.25) circle (2.5pt);
 \node at (.45,-.75) {\tiny $-\l_i-1+y$};
 \node at (1.15,1.1) { $\;$};
\end{tikzpicture}  }\;  \nn
\\
& \;  =\;
    - \frac{D_i^{-\l_i-1 +j}  }{  \hat{\sfc}_{i,\l}^{+}}
\displaystyle \sum_{\stackrel{\scs x+y=j}{\scs x\geq 1}}
 \hackcenter{ \begin{tikzpicture}[scale=.7]
 \draw (-.15,.35) node { $\scs i$};
 \draw  (0,0) arc (180:360:0.5cm) [thick, blue];
 \draw[,<-](1,0) arc (0:180:0.5cm) [thick, blue];
\filldraw  [black] (.1,-.25) circle (2.5pt);
 \node at (.25,-.75) {\tiny $ \l_i-1 +x$};
 \node at (.85,1) { $\lambda$};
\end{tikzpicture} \;\;
\begin{tikzpicture}[scale=.7]
 \draw (-.15,.35) node { $\scs i$};
 \draw  (0,0) arc (180:360:0.5cm) [thick, blue];
 \draw[->](1,0) arc (0:180:0.5cm) [thick,blue];
\filldraw  [black] (.9,-.25) circle (2.5pt);
 \node at (.45,-.75) {\tiny $-\l_i-1+y$};
 \node at (1.15,1.1) { $\;$};
\end{tikzpicture}  }\;
\; = \;
D_i^{-\l_i-1 +j}  \;
\hackcenter{ \begin{tikzpicture} [scale=.7]
 \draw (-.15,.35) node { $\scs i$};
 \draw  (0,0) arc (180:360:0.5cm) [thick, blue];
 \draw[,->](1,0) arc (0:180:0.5cm) [thick, blue];
\filldraw  [black] (.1,-.25) circle (2.5pt);
 \node at (.15,-.75) {\tiny $\l_i -1+j$};
 \node at (1.15,1) { $\lambda  $};
\end{tikzpicture} } \nn
\end{align}
Hence, the $\mf{sl}_2$ relation follows since
\begin{align}
  \beth \left( \beta_{i,\l}\;
\sum_{\overset{f_1+f_2+f_3}{=\l_i-1}}\hackcenter{
 \begin{tikzpicture}[scale=0.8]
 \draw[thick,->] (0,-1.0) .. controls ++(0,.5) and ++ (0,.5) .. (.8,-1.0) node[pos=.75, shape=coordinate](DOT1){};
  \draw[thick,<-] (0,1.0) .. controls ++(0,-.5) and ++ (0,-.5) .. (.8,1.0) node[pos=.75, shape=coordinate](DOT3){};
 \draw[thick,->] (0,0) .. controls ++(0,-.45) and ++ (0,-.45) .. (.8,0)node[pos=.25, shape=coordinate](DOT2){};
 \draw[thick] (0,0) .. controls ++(0,.45) and ++ (0,.45) .. (.8,0);
 \draw (-.15,.7) node { $\scs i$};
\draw (1.05,0) node { $\scs i$};
\draw (-.15,-.7) node { $\scs i$};
 \node at (.95,.65) {\tiny $f_3$};
 \node at (-.55,-.05) {\tiny $\overset{-\l_i-1}{+f_2}$};
  \node at (.95,-.65) {\tiny $f_1$};
 \node at (1.95,.3) { $\lambda $};
 \filldraw[thick]  (DOT3) circle (2.5pt);
  \filldraw[thick]  (DOT2) circle (2.5pt);
  \filldraw[thick]  (DOT1) circle (2.5pt);
\end{tikzpicture} }  \; \right)
& \; = \;
  \beta_{i,\l}
\sum_{\overset{f_1+f_2+f_3}{=\l_i-1}}
\; D_i^{f_1 + f_3} D_i^{-\l_i -1 + f_2}
\hackcenter{
 \begin{tikzpicture}[scale=0.8]
 \draw[thick,blue, ->] (0,-1.0) .. controls ++(0,.5) and ++ (0,.5) .. (.8,-1.0) node[pos=.75, shape=coordinate](DOT1){};
  \draw[thick,blue,<-] (0,1.0) .. controls ++(0,-.5) and ++ (0,-.5) .. (.8,1.0) node[pos=.75, shape=coordinate](DOT3){};
 \draw[thick,blue,->] (0,0) .. controls ++(0,-.45) and ++ (0,-.45) .. (.8,0)node[pos=.25, shape=coordinate](DOT2){};
 \draw[thick,blue] (0,0) .. controls ++(0,.45) and ++ (0,.45) .. (.8,0);
 \draw (-.15,.7) node { $\scs i$};
\draw (1.05,0) node { $\scs i$};
\draw (-.15,-.7) node { $\scs i$};
 \node at (.95,.65) {\tiny $f_3$};
 \node at (-.55,-.05) {\tiny $\overset{-\l_i-1}{+f_2}$};
  \node at (.95,-.65) {\tiny $f_1$};
 \node at (1.95,.3) { $\lambda $};
 \filldraw[thick,blue]  (DOT3) circle (2.5pt);
  \filldraw[thick,blue]  (DOT2) circle (2.5pt);
  \filldraw[thick,blue]  (DOT1) circle (2.5pt);
\end{tikzpicture} }
\nn
\\
& \; = \;
  \hat{\beta}_{i,\l}
\sum_{\overset{f_1+f_2+f_3}{=\l_i-1}}
\hackcenter{
 \begin{tikzpicture}[scale=0.8]
 \draw[thick,blue, ->] (0,-1.0) .. controls ++(0,.5) and ++ (0,.5) .. (.8,-1.0) node[pos=.75, shape=coordinate](DOT1){};
  \draw[thick,blue,<-] (0,1.0) .. controls ++(0,-.5) and ++ (0,-.5) .. (.8,1.0) node[pos=.75, shape=coordinate](DOT3){};
 \draw[thick,blue,->] (0,0) .. controls ++(0,-.45) and ++ (0,-.45) .. (.8,0)node[pos=.25, shape=coordinate](DOT2){};
 \draw[thick,blue] (0,0) .. controls ++(0,.45) and ++ (0,.45) .. (.8,0);
 \draw (-.15,.7) node { $\scs i$};
\draw (1.05,0) node { $\scs i$};
\draw (-.15,-.7) node { $\scs i$};
 \node at (.95,.65) {\tiny $f_3$};
 \node at (-.55,-.05) {\tiny $\overset{-\l_i-1}{+f_2}$};
  \node at (.95,-.65) {\tiny $f_1$};
 \node at (1.95,.3) { $\lambda $};
 \filldraw[thick,blue]  (DOT3) circle (2.5pt);
  \filldraw[thick,blue]  (DOT2) circle (2.5pt);
  \filldraw[thick,blue]  (DOT1) circle (2.5pt);
\end{tikzpicture} } \nn
\end{align}
and
\begin{equation}
 \beth \left(\;\beta_{i,\l}\;
 \hackcenter{\begin{tikzpicture}[scale=0.8]
    \draw[thick] (0,0) .. controls ++(0,.5) and ++(0,-.5) .. (.75,1);
    \draw[thick, <-] (.75,0) .. controls ++(0,.5) and ++(0,-.5) .. (0,1);
    \draw[thick] (0,1 ) .. controls ++(0,.5) and ++(0,-.5) .. (.75,2);
    \draw[thick, ->] (.75,1) .. controls ++(0,.5) and ++(0,-.5) .. (0,2);
        \node at (-.2,.15) {\tiny $i$};
    \node at (.95,.15) {\tiny $i$};
\end{tikzpicture}} \;\; \right)
\;\; = \;\; D_i^{-2} \beta_{i,\l}
 \hackcenter{\begin{tikzpicture}[scale=0.8]
    \draw[thick, blue] (0,0) .. controls ++(0,.5) and ++(0,-.5) .. (.75,1);
    \draw[thick, blue, <-] (.75,0) .. controls ++(0,.5) and ++(0,-.5) .. (0,1);
    \draw[thick, blue] (0,1 ) .. controls ++(0,.5) and ++(0,-.5) .. (.75,2);
    \draw[thick, blue, ->] (.75,1) .. controls ++(0,.5) and ++(0,-.5) .. (0,2);
        \node at (-.2,.15) {\tiny $i$};
    \node at (.95,.15) {\tiny $i$};
\end{tikzpicture}}
\;\; = \;\; \hat{\beta}_{i,\l}
 \hackcenter{\begin{tikzpicture}[scale=0.8]
    \draw[thick, blue] (0,0) .. controls ++(0,.5) and ++(0,-.5) .. (.75,1);
    \draw[thick, blue, <-] (.75,0) .. controls ++(0,.5) and ++(0,-.5) .. (0,1);
    \draw[thick, blue] (0,1 ) .. controls ++(0,.5) and ++(0,-.5) .. (.75,2);
    \draw[thick, blue, ->] (.75,1) .. controls ++(0,.5) and ++(0,-.5) .. (0,2);
        \node at (-.2,.15) {\tiny $i$};
    \node at (.95,.15) {\tiny $i$};
\end{tikzpicture}} \nn
\end{equation}
The other $\mf{sl}_2$ relation follows similarly.
\end{proof}

\subsection{Rescalings for a preferred choice of scalars $Q$ } \label{sec:generalQ}

The choice of scalars $Q$ controls the form of the KLR algebra $R_Q$ that governs the upward oriented strands.  The algebras $R_Q$ are governed by the products $v_{ij}=t_{ij}^{-1}t_{ji}$ taken
over all pairs $i,j\in I$.   The products $v_{ij}$ can be thought of as a $\Bbbk^{\times}$-valued 1-cocycle on the graph $\Gamma$ associated to the symmetric Cartan data; we call two choices {\it cohomologous} if these 1-cocycles are in the same cohomology class.
For cohomologous choices of scalars $Q$ and $Q'$ the algebras $R_Q$ and $R_Q'$ are isomorphic, though not canonically.
If $\Gamma$ is a tree,  in particular, a Dynkin diagram, then all choices of scalars are cohomologous.
However, an explicit isomorphism of algebras has not been presented in the literature.  Here we give this isomorphism $R_Q \to R_{Q'}$ for any choices of scalars $Q$ and $Q'$ and discuss how to extend to the level of 2-categories $\cal{U}_{Q,\beta}(\mf{g})$ and $\cal{U}_{Q',\beta}(\mf{g})$.  In \cite{Kash-param} Kashiwara studies the effect of these parameters on simple modules for the KLR algebra and show that these generically correspond to upper global canonical bases.

\subsubsection{KLR isomorphisms for trees}

Let $\Gamma$ be the underlying graph associated to a simply-laced Kac-Moody algebra.  If $\Gamma$ is a tree, then any two vertices $i,j \in I$ of the graph are connected by a unique path.  To specify the isomorphism we designate a distinguished vertex $r \in I$ as the root of the tree.  This defines an orientation on the graph in which all edges are oriented away from $r$.  Define the \emph{level} of a vertex $i \in I$ to be its distance from the root $r$.  We write $i < j$ if vertex $i$ occurs in a lower level than vertex $j$.

Denote by  $P_{i}$ the unique directed path of edges from the root vertex $r$ to vertex $i$.  For any edge $e \in \Gamma$ we write $e \in P_{i}$ if $e$ appears in the path from $r$ to $i$.  Let $s(e) \in I$ denote the source vertex of the edge $e$ and $t(e) \in I$ denote the target of the directed edge $e$.

Define parameters
\begin{equation} \label{eq:Di}
 D_i :=  \displaystyle\prod_{e \in P_{i}} v_{s(e) t(e)}' v_{t(e) s(e)},\qquad
 D_{i}^{-1} := \displaystyle\prod_{e \in P_{i}} v_{ t(e)s(e)}' v_{s(e)t(e) }
\end{equation}
so that $D_i^{-1}$ is the inverse of $D_i$, since $v_{ij} = t_{ij}^{-1} t_{ji}$ so that $v_{ji} = v_{ij}^{-1}$.  For the root of the tree $r \in I$ we have $D_r =1$.

\begin{proposition}
Let $Q$ and $Q'$ be choices of scalars for a KLR algebra associated to a simply-laced Kac-Moody algebra whose underlying graph is a tree.   Then associated to a fixed choice of root for the tree, there is an algebra isomorphism
\begin{align}
  \gimel \maps R_Q \longrightarrow R_{Q'}
\end{align}
defined by
\begin{align} \label{eq:gimel-KLR}
 \gimel \left(\;\;
\hackcenter{
\begin{tikzpicture}[scale=0.8]
    \draw[thick, ->] (0,0) -- (0,1.5)
        node[pos=.5, shape=coordinate](DOT){};
    \filldraw  (DOT) circle (2.5pt);
    \node at (.5,.85) {\tiny $\lambda$};
    \node at (-.2,.1) {\tiny $i$};
\end{tikzpicture}}   \right)
\;\;  &= \;\;
   D_i  \;
\hackcenter{
\begin{tikzpicture}[scale=0.8, blue]
    \draw[thick, ->] (0,0) -- (0,1.5)
        node[pos=.5, shape=coordinate](DOT){};
    \filldraw  (DOT) circle (2.5pt);
    \node at (.5,.85) {\tiny $\lambda$};
    \node at (-.2,.1) {\tiny $i$};
\end{tikzpicture}}
 \\
\gimel \left(  \hackcenter{\begin{tikzpicture}[scale=0.8]
    \draw[thick, ->] (0,0) .. controls (0,.5) and (.75,.5) .. (.75,1.0);
    \draw[thick, ->] (.75,0) .. controls (.75,.5) and (0,.5) .. (0,1.0);
    \node at (1.1,.55) {\tiny $\lambda$};
    \node at (-.2,.1) {\tiny $i$};
    \node at (.95,.1) {\tiny $j$};
\end{tikzpicture}} \right)
\;\;   &= \;\;
\left\{
  \begin{array}{cl}
  D_i^{-1}\;
    \hackcenter{\begin{tikzpicture}[scale=0.8, blue]
    \draw[thick, ->] (0,0) .. controls (0,.5) and (.75,.5) .. (.75,1.0);
    \draw[thick, ->] (.75,0) .. controls (.75,.5) and (0,.5) .. (0,1.0);
    \node at (1.1,.55) {\tiny $\lambda$};
    \node at (-.2,.1) {\tiny $i$};
    \node at (.95,.1) {\tiny $j$};
\end{tikzpicture}}
    & \hbox{if $i = j$,}
 \smallskip\\
  t_{ji}(t_{ji}')^{-1}  D_j  \;
  \hackcenter{\begin{tikzpicture}[scale=0.8, blue]
    \draw[thick, ->] (0,0) .. controls (0,.5) and (.75,.5) .. (.75,1.0);
    \draw[thick, ->] (.75,0) .. controls (.75,.5) and (0,.5) .. (0,1.0);
    \node at (1.1,.55) {\tiny $\lambda$};
    \node at (-.2,.1) {\tiny $i$};
    \node at (.95,.1) {\tiny $j$};
\end{tikzpicture}}
 & \hbox{if $i < j$ and $i \cdot j =-1$,}
\\
 t_{ij}(t_{ij}')^{-1}    \;
  \hackcenter{\begin{tikzpicture}[scale=0.8, blue]
    \draw[thick, ->] (0,0) .. controls (0,.5) and (.75,.5) .. (.75,1.0);
    \draw[thick, ->] (.75,0) .. controls (.75,.5) and (0,.5) .. (0,1.0);
    \node at (1.1,.55) {\tiny $\lambda$};
    \node at (-.2,.1) {\tiny $i$};
    \node at (.95,.1) {\tiny $j$};
\end{tikzpicture}}
 & \hbox{if $i < j$ and $i \cdot j =0$,}
\\
     \hackcenter{\begin{tikzpicture}[scale=0.8, blue]
    \draw[thick, ->] (0,0) .. controls (0,.5) and (.75,.5) .. (.75,1.0);
    \draw[thick, ->] (.75,0) .. controls (.75,.5) and (0,.5) .. (0,1.0);
    \node at (1.1,.55) {\tiny $\lambda$};
    \node at (-.2,.1) {\tiny $i$};
    \node at (.95,.1) {\tiny $j$};
\end{tikzpicture}}
   & \hbox{otherwise,}
  \end{array}
\right.
   \nn
\end{align}
with parameters $D_i$ for $i\in I$ defined in \eqref{eq:Di}.
\end{proposition}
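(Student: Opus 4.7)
The plan is to show that $\gimel$ preserves each of the KLR defining relations and is therefore a well-defined algebra map; since every local rescaling factor in \eqref{eq:gimel-KLR} is invertible, the inverse is given by the analogous rescaling with $Q$ and $Q'$ interchanged (one checks that $D_i$ computed from $Q'$ in place of $Q$ equals $D_i^{-1}$), so $\gimel$ is in fact an isomorphism. The key observation is that on any braid-like diagram, $\gimel$ multiplies by a scalar equal to the product of local factors, one for each dot and one for each crossing.

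I will first dispatch the easy relations. In the nilHecke dot-sliding relation \eqref{eq:nil-dot} each term contains one dot (factor $D_i$) and one $ii$-crossing (factor $D_i^{-1}$), so the local factors cancel on every term and both sides in $R_Q$ map to the corresponding relation in $R_{Q'}$. The mixed dot-sliding for $i\neq j$ is trivial since the two sides of the relation have the same configuration of one crossing and one dot. The non-exceptional braid relation \eqref{heis:up-triple} is preserved because the two sides carry the same multiset of bottom crossing types, hence are rescaled by the same scalar under $\gimel$.

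The quadratic relation \eqref{eq_r2_ij-gen-cyc} splits into cases. When $i\cdot j=0$ and $i\neq j$, the identity $t_{ij}=t_{ji}$ makes the crossing factor $t_{ji}(t_{ji}')^{-1}$ coincide with $t_{ij}/t_{ij}'$, which is exactly the scalar needed to convert the relation in $R_{Q'}$ back to the one in $R_Q$. When $i\cdot j=-1$, the definition \eqref{eq:Di} gives the telescoping identity
\begin{equation*}
\frac{D_j}{D_i} \;=\; v_{ij}'\,v_{ji} \;=\; \frac{t_{ji}'\,t_{ij}}{t_{ij}'\,t_{ji}}
\end{equation*}
whenever $i$ is the parent of $j$ in the rooted tree. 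Matching the coefficients of the two dotted terms on the right-hand side of \eqref{eq_r2_ij-gen-cyc} under $\gimel$ then reduces, in the case $i<j$, to this identity (for the left-dot term) together with a trivial cancellation (for the right-dot term); the case $i>j$ is symmetric, handled by the same identity with $i$ and $j$ exchanged.

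The braid relation with correction \eqref{eq:KLR-r3} is the decisive test. Both triple crossings carry the multiset of bottom types $\{ii,\,ij,\,ji\}$, so both are rescaled by a common scalar $C$; for the relation to transport into $R_{Q'}$ one needs $C = t_{ij}/t_{ij}'$, and a direct computation using \eqref{eq:gimel-KLR} together with the telescoping identity above yields exactly this in either case $i<j$ or $i>j$. The principal obstacle is thus the identification of the scalars $D_i$ as the components of a $0$-cochain on $\Gamma$ whose coboundary is the $1$-cochain $v_{\bullet\bullet}/v_{\bullet\bullet}'$; this is exactly where the tree hypothesis enters, for only then does every $1$-cochain arise as a coboundary, and the path product $\prod_{e\in P_i}$ then provides the canonical choice once a root is fixed.
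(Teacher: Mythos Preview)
Your proof is correct and follows essentially the same approach as the paper: both reduce everything to the key telescoping identity $D_j/D_i = v_{ij}'v_{ji}$ for an edge $i\to j$ in the rooted tree, and then verify the quadratic and cubic KLR relations by direct substitution. Your additions---the explicit remark on invertibility via swapping $Q\leftrightarrow Q'$, the multiset-of-crossing-types observation for the braid relations, and the closing cohomological interpretation of $D_i$ as a $0$-cochain trivializing the $1$-cocycle $v/v'$---are nice conceptual packaging that the paper either leaves implicit or mentions only in the surrounding discussion, but they do not change the underlying argument.
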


\begin{proof}
It is immediate that the nilHecke relations are preserved because the $i$ colored dot is rescaled inversely to the $ii$ colored crossing.
If $i < j$ and $i\cdot j =-1$, then the path $P_i$ is identical to the path $P_j$ with the addition of the edge from $i \to j$. Hence,
\begin{equation} \label{eq:PiPj}
D_j
= \displaystyle\prod_{e \in P_{j}} v_{s(e) t(e)}' v_{t(e) s(e)}
=   v_{ij}' v_{ji}\displaystyle\prod_{e \in P_{i}} v_{s(e) t(e)}' v_{t(e) s(e)}
= v_{ij}' v_{ji} D_i
\end{equation}
and in particular,
\begin{equation} \label{AA1}
  t_{ji}(t_{ji}')^{-1}  D_j\; =\;
 t_{ji}(t_{ji}')^{-1} v_{ij}' v_{ji} D_i
\; = \;
t_{ij}(t_{ij}')^{-1}  D_i.
\end{equation}
Hence, if $i<j$ and $i\cdot j =-1$ then
\begin{align}
\gimel \left( \; \hackcenter{
\begin{tikzpicture}[scale=0.8]
    \draw[thick, ->] (0,0) .. controls ++(0,.5) and ++(0,-.4) .. (.75,.8) .. controls ++(0,.4) and ++(0,-.5) .. (0,1.6);
    \draw[thick, ->] (.75,0) .. controls ++(0,.5) and ++(0,-.4) .. (0,.8) .. controls ++(0,.4) and ++(0,-.5) .. (.75,1.6);
    \node at (1.1,1.25) { $\lambda$};
    \node at (-.2,.1) {\tiny $i$};
    \node at (.95,.1) {\tiny $j$};
\end{tikzpicture}} \; \right)
\; = \; t_{ji}(t_{ji}')^{-1} D_j
\hackcenter{
\begin{tikzpicture}[scale=0.8]
    \draw[thick, blue, ->] (0,0) .. controls ++(0,.5) and ++(0,-.4) .. (.75,.8) .. controls ++(0,.4) and ++(0,-.5) .. (0,1.6);
    \draw[thick,blue, ->] (.75,0) .. controls ++(0,.5) and ++(0,-.4) .. (0,.8) .. controls ++(0,.4) and ++(0,-.5) .. (.75,1.6);
    \node at (1.1,1.25) { $\lambda$};
    \node at (-.2,.1) {\tiny $i$};
    \node at (.95,.1) {\tiny $j$};
\end{tikzpicture}}
\; = \; t_{ij}(t_{ij}')^{-1} D_i
\hackcenter{\begin{tikzpicture}[scale=0.8]
    \draw[thick, blue, ->] (0,0) .. controls ++(0,.5) and ++(0,-.4) .. (.75,.8) .. controls ++(0,.4) and ++(0,-.5) .. (0,1.6);
    \draw[thick,blue, ->] (.75,0) .. controls ++(0,.5) and ++(0,-.4) .. (0,.8) .. controls ++(0,.4) and ++(0,-.5) .. (.75,1.6);
    \node at (1.1,1.25) { $\lambda$};
    \node at (-.2,.1) {\tiny $i$};
    \node at (.95,.1) {\tiny $j$};
\end{tikzpicture}}
\end{align}
which, together with the dot rescaling, transforms the quadratic KLR relation in parameters $Q$ to the corresponding relation with parameters $Q'$.  A similar argument establishes the quadratic KLR relation for $j<i$ and $i\cdot j=-1$.  The quadratic relation for $i\cdot j =0$ is immediate.

All of the cubic KLR relations follow immediately except for the case \eqref{eq:KLR-r3}.  If $i < j$ and $i\cdot j =-1$ then
\begin{align}
& \nn\gimel
\left( \;
\hackcenter{\begin{tikzpicture}[scale=0.7]
    \draw[thick, ->] (0,0) .. controls ++(0,1) and ++(0,-1) .. (1.5,2);
    \draw[thick, ] (.75,0) .. controls ++(0,.5) and ++(0,-.5) .. (0,1);
    \draw[thick, ->] (0,1) .. controls ++(0,.5) and ++(0,-.5) .. (0.75,2);
    \draw[thick, ->] (1.5,0) .. controls ++(0,1) and ++(0,-1) .. (0,2);
    \node at (-.2,.15) {\tiny $i$};
    \node at (.95,.15) {\tiny $j$};
    \node at (1.75,.15) {\tiny $i$};
\end{tikzpicture}}
\;- \;
\hackcenter{\begin{tikzpicture}[scale=0.7]
    \draw[thick, ->] (0,0) .. controls ++(0,1) and ++(0,-1) .. (1.5,2);
    \draw[thick, ] (.75,0) .. controls ++(0,.5) and ++(0,-.5) .. (1.5,1);
    \draw[thick, ->] (1.5,1) .. controls ++(0,.5) and ++(0,-.5) .. (0.75,2);
    \draw[thick, ->] (1.5,0) .. controls ++(0,1) and ++(0,-1) .. (0,2);
    \node at (-.2,.15) {\tiny $i$};
    \node at (.95,.15) {\tiny $j$};
    \node at (1.75,.15) {\tiny $i$};
\end{tikzpicture}}
\; \right)
\; \refequal{\eqref{AA1}} \;
t_{ij}(t_{ij}')^{-1}
\left(
\;
\hackcenter{\begin{tikzpicture}[scale=0.7]
    \draw[thick, blue,->] (0,0) .. controls ++(0,1) and ++(0,-1) .. (1.5,2);
    \draw[thick,blue, ] (.75,0) .. controls ++(0,.5) and ++(0,-.5) .. (0,1);
    \draw[thick,blue, ->] (0,1) .. controls ++(0,.5) and ++(0,-.5) .. (0.75,2);
    \draw[thick, blue,->] (1.5,0) .. controls ++(0,1) and ++(0,-1) .. (0,2);
    \node at (-.2,.15) {\tiny $i$};
    \node at (.95,.15) {\tiny $j$};
    \node at (1.75,.15) {\tiny $i$};
\end{tikzpicture}}
\;- \;
\hackcenter{\begin{tikzpicture}[scale=0.7]
    \draw[thick,blue, ->] (0,0) .. controls ++(0,1) and ++(0,-1) .. (1.5,2);
    \draw[thick,blue, ] (.75,0) .. controls ++(0,.5) and ++(0,-.5) .. (1.5,1);
    \draw[thick,blue, ->] (1.5,1) .. controls ++(0,.5) and ++(0,-.5) .. (0.75,2);
    \draw[thick,blue, ->] (1.5,0) .. controls ++(0,1) and ++(0,-1) .. (0,2);
    \node at (-.2,.15) {\tiny $i$};
    \node at (.95,.15) {\tiny $j$};
    \node at (1.75,.15) {\tiny $i$};
\end{tikzpicture}}
\right)
\nn
\end{align}
so that the nontrivial cubic KLR relation also follows.
\end{proof}

\subsubsection{KLR rescaling for 2-categories}

The isomorphism $\gimel \maps R_Q \to R_{Q'}$ of KLR algebras can be extended to an isomorphism of 2-categories.    Without rescaling the caps and cups, the assignment \eqref{eq:gimel-KLR} will rescale the  bubble parameters as in \eqref{eq:nil-bub}, but these new parameters need not be compatible with the coefficients $Q'$.  If we assume that a set of bubble parameters $\beta'$ compatible with $Q'$ have been specified, then it is possible to rescale appropriately to extend $\gimel$ to an isomorphism of 2-categories.

\begin{theorem}
 Let $\beta$, respectively $\beta'$, be bubble parameters with compatible choice of scalars $Q$, respectively $Q'$.  Then the algebra isomorphism $\gimel \maps R_Q \to R_{Q'}$ from \eqref{eq:gimel-KLR} extends to an isomorphism of 2-categories
\begin{equation}
 \gimel \maps \; \cal{U}_{Q,\beta} \; \longrightarrow \; \cal{U}_{Q',\beta'}
\end{equation}
given by
\begin{align}
   \beth\left(\;\;
\hackcenter{\begin{tikzpicture}[scale=0.8]
    \draw[thick, <-] (.75,-2) .. controls ++(0,.75) and ++(0,.75) .. (0,-2);
    \node at (.4,-1.2) {\tiny $\lambda$};
    \node at (-.2,-1.9) {\tiny $i$};
\end{tikzpicture}}
\;\;\right)
\;\;  = \;\; D_i^{-\l_i+1} \frac{\sfc_{i,\l}^{ +}}{\sfc_{i,\l}^{\prime +}}
    \hackcenter{\begin{tikzpicture}[scale=0.8, blue]
    \draw[thick, <-] (.75,-2) .. controls ++(0,.75) and ++(0,.75) .. (0,-2);
    \node at (.4,-1.2) {\tiny $\lambda$};
    \node at (-.2,-1.9) {\tiny $i$};
\end{tikzpicture}}
\qquad \qquad
\beth \left(
\;\; \hackcenter{\begin{tikzpicture}[scale=0.8]
    \draw[thick, <-] (.75,2) .. controls ++(0,-.75) and ++(0,-.75) .. (0,2);
    \node at (.4,1.2) {\tiny $\lambda$};
    \node at (-.2,1.9) {\tiny $i$};
\end{tikzpicture}}
\;\; \right)
\;\;  = \;\; D_{i}^{\l_i+1} \frac{\sfc_{i,\l+\alpha_i}^{\prime +}}{\sfc_{i,\l+\alpha_i}^{+}}
    \hackcenter{\begin{tikzpicture}[scale=0.8, blue]
    \draw[thick, <-] (.75,2) .. controls ++(0,-.75) and ++(0,-.75) .. (0,2);
    \node at (.4,1.2) {\tiny $\lambda$};
    \node at (-.2,1.9) {\tiny $i$};
\end{tikzpicture}}
\end{align}
and sending the other cap and cup to themselves.
\end{theorem}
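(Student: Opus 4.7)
By the preceding proposition, the KLR-subalgebra relations (involving only dots and upward crossings) are preserved by $\gimel$, so it remains to verify the relations involving caps and cups: biadjunction \eqref{eq_biadjoint1-cyc}--\eqref{eq_biadjoint2-cyc}, cyclicity of dots and crossings \eqref{eq_cyclic_dot-cyc}--\eqref{eq_cyclic}, the degree-zero bubble values \eqref{eq:degreezero}, the mixed relations \eqref{mixed_rel-cyc}, and the $\mf{sl}_2$-decompositions \eqref{eq:modEF}. My strategy is to observe that on upward strands $\gimel$ coincides with the nilHecke rescaling $\beth$ of Proposition~\ref{prop:nilparam} at parameters $D_i$ from \eqref{eq:Di}, up to the extra asymmetric factor $t_{ji}(t_{ji}')^{-1}$ on cross-colour crossings that realizes the change from $\hat{t}_{ij}$ to $t_{ij}'$. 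Thus the dot-rescaling contributions $D_i^{\pm \l_i}$ in the bubble and $\mf{sl}_2$ computations can be analyzed exactly as in Proposition~\ref{prop:nilparam}, and the specific scalar factors $\sfc^+_{i,\l}/\sfc^{\prime+}_{i,\l}$ in the cap/cup rescalings are precisely what is needed to further convert the bubble parameters $\hat{\sfc}^{\pm}_{i,\l}$ (into which $\beth$ sends $\sfc^{\pm}_{i,\l}$) into $\sfc^{\prime\pm}_{i,\l}$.

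For biadjunction, the two axioms involving only the unrescaled cap and cup are automatic. For the rescaled pair, the cap sits in weight $\l+\alpha_i$ while the cup is in weight $\l$, and the product of the rescaling factors collapses to
\[
\left(D_i^{-(\l_i+2)+1}\,\tfrac{\sfc^+_{i,\l+\alpha_i}}{\sfc^{\prime+}_{i,\l+\alpha_i}}\right)\left(D_i^{\l_i+1}\,\tfrac{\sfc^{\prime+}_{i,\l+\alpha_i}}{\sfc^+_{i,\l+\alpha_i}}\right) = 1.
\]
Dot cyclicity then follows from biadjunction, and crossing cyclicity reduces via \eqref{eq_crossl-gen-cyc} to a case-by-case matching of the asymmetric scalar factors appearing in the cross-colour rescaling. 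The bubble relations \eqref{eq:degreezero} require the product of cap, dot, and cup rescalings for a counterclockwise bubble to equal $\sfc^+_{i,\l}/\sfc^{\prime+}_{i,\l}$; this is immediate from the chosen rescaling formulas combined with the dot factor $D_i^{\l_i-1}$, and the clockwise case is forced by the compatibility identity \eqref{eq:ccinv}. The mixed relations \eqref{mixed_rel-cyc} then follow from biadjunction together with \eqref{eq_crossl-gen-cyc}.

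I expect the $\mf{sl}_2$-relations \eqref{eq:modEF} to be the main obstacle, since each side mixes cap/cup rescalings, dot rescalings, and bubble coefficients (including fake bubbles) at multiple weights along an $\mf{sl}_2$-string. Modelled on the calculation at the end of the proof of Proposition~\ref{prop:nilparam}, the verification will reduce to iterated applications of the shift relation $\sfc^+_{i,\l+\alpha_i}=t_{ii}\sfc^+_{i,\l}=-\beta_i\sfc^+_{i,\l}$ and its primed counterpart, together with the analogous identity for $\sfc^-$, in order to match the coefficient $\beta_{i,\l}$ on the left of \eqref{eq:modEF} with $\beta'_{i,\l}$ on the right after all the rescaling factors have been collected.
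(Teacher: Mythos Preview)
Your proposal is correct and follows essentially the same approach as the paper: verify each defining relation of $\cal{U}_{Q,\beta}$ is preserved by computing the accumulated rescaling factors and showing they collapse using the compatibility identities \eqref{eq:ccinv}--\eqref{eq:c-change-sl} and the tree relation \eqref{eq:PiPj}. Your organizing idea of viewing $\gimel$ on upward strands as the $\beth$ of Proposition~\ref{prop:nilparam} plus the asymmetric factor $t_{ji}(t'_{ji})^{-1}$ is a clean way to recycle that proof, and the paper does the same computation without naming it this way.

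One point to sharpen: your treatment of the mixed relations \eqref{mixed_rel-cyc} is too brief. Biadjunction together with \eqref{eq_crossl-gen-cyc} only tells you the sideways double crossing is \emph{well-defined}; you still must compute its rescaling factor and check it equals $1$ when $i\neq j$. The paper does this explicitly, obtaining the factor $\gamma\cdot D_i^{(\alpha_i,\alpha_j)} t'_{ij} t_{ij}^{-1}$ and then invoking the specific form of $\gamma$ from \eqref{eq:gimel-KLR} together with \eqref{eq:PiPj} to see it collapses to $1$ for $i\neq j$ and to $\beta'_i/\beta_i$ for $i=j$. This same computation is what feeds into the double-crossing term of the $\mf{sl}_2$-relation, so you should carry it out once and reuse it, as the paper does.
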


\begin{proof}
 Note that the coefficient for the rightward pointing cup can be rewritten
\[
 D_{i}^{\l_i+1} \frac{\sfc_{i,\l+\alpha_i}^{\prime +}}{\sfc_{i,\l+\alpha_i}^{+}}
\; \refequal{\eqref{eq:c-change-sl}} \;
D_{i}^{\l_i+1}  \frac{t_{ii}'}{t_{ii}} \frac{\sfc_{i,\l}^{\prime +}}{\sfc_{i,\l}^{+}}
\; \refequal{\eqref{eq:ccinv}}\;
D_{i}^{\l_i+1}   \frac{\sfc_{i,\l}^{-}}{\sfc_{i,\l}^{\prime -}}.
\]
Using this it is not difficult to show that the adjunction axioms and dot cyclicity are preserved.  It is also straightforward to show that bubble coefficients are preserved.  For example,
\begin{align}
\sfc_{i,\l}^+ \; = \;
\gimel \left(\;\;
\hackcenter{ \begin{tikzpicture} [scale=.8]
 \draw  (-.75,1) arc (360:180:.45cm) [thick];
 \draw[<-](-.75,1) arc (0:180:.45cm) [thick];
     \filldraw  [black] (-1.55,.75) circle (2.5pt);
        \node at (-1.3,.3) { $\scriptstyle \l_i-1$};
        \node at (-1.4,1.7) { $i $};
 \node at (-.2,1.5) { $\lambda $};
\end{tikzpicture}}
\;\; \right)
\;\; = \;\;
D_i^{\l_i-1} \cdot D_i^{-\l_i+1} \frac{\sfc_{i,\l}^{ +}}{\sfc_{i,\l}^{\prime +}}
 \;\;
\hackcenter{ \begin{tikzpicture} [scale=.8, blue]
 \draw  (-.75,1) arc (360:180:.45cm) [thick];
 \draw[<-](-.75,1) arc (0:180:.45cm) [thick];
     \filldraw  [black] (-1.55,.75) circle (2.5pt);
        \node at (-1.3,.3) { $\scriptstyle \l_i-1$};
        \node at (-1.4,1.7) { $i $};
 \node at (-.2,1.5) { $\lambda $};
\end{tikzpicture}}
\end{align}
For crossing cyclicity let $\gamma$ be the rescaling of the $ij$ crossing (for arbitrary relationship between $i$ and $j$) so that
\begin{align} \nn
& \gimel \left(
\hackcenter{\begin{tikzpicture}[scale=0.7]
    \draw[thick, ->] (0,0) .. controls (0,.5) and (.75,.5) .. (.75,1.0);
    \draw[thick, ->] (.75,0) .. controls (.75,.5) and (0,.5) .. (0,1.0);
    \draw[thick] (0,0) .. controls ++(0,-.4) and ++(0,-.4) .. (-.75,0) to (-.75,2);
    \draw[thick] (.75,0) .. controls ++(0,-1.2) and ++(0,-1.2) .. (-1.5,0) to (-1.55,2);
    \draw[thick, ->] (.75,1.0) .. controls ++(0,.4) and ++(0,.4) .. (1.5,1.0) to (1.5,-1);
    \draw[thick, ->] (0,1.0) .. controls ++(0,1.2) and ++(0,1.2) .. (2.25,1.0) to (2.25,-1);
    \node at (-.35,.75) {  $\lambda$};
    \node at (1.3,-.7) {\tiny $i$};
    \node at (2.05,-.7) {\tiny $j$};
    \node at (-.9,1.7) {\tiny $i$};
    \node at (-1.7,1.7) {\tiny $j$};
\end{tikzpicture}}
\; \right)
 =   \gamma
D_j^{-\l_j+1} \frac{\sfc_{j,\l}^{ +}}{\sfc_{j,\l}^{\prime +}}
\cdot
D_i^{-\l_i} \frac{\sfc_{i,\l-\alpha_j}^{ +}}{\sfc_{i,\l-\alpha_j}^{\prime +}}
\cdot
D_{i}^{\l_i-1} \frac{\sfc_{i,\l}^{\prime +}}{\sfc_{i,\l}^{+}}
\cdot
D_{j}^{\l_j} \frac{\sfc_{j,\l-\alpha_i}^{\prime +}}{\sfc_{j,\l-\alpha_i}^{+}}
\hackcenter{\begin{tikzpicture}[scale=0.7]
    \draw[thick,blue, ->] (0,0) .. controls (0,.5) and (.75,.5) .. (.75,1.0);
    \draw[thick, blue,->] (.75,0) .. controls (.75,.5) and (0,.5) .. (0,1.0);
    \draw[thick, blue] (0,0) .. controls ++(0,-.4) and ++(0,-.4) .. (-.75,0) to (-.75,2);
    \draw[thick, blue] (.75,0) .. controls ++(0,-1.2) and ++(0,-1.2) .. (-1.5,0) to (-1.55,2);
    \draw[thick, blue, ->] (.75,1.0) .. controls ++(0,.4) and ++(0,.4) .. (1.5,1.0) to (1.5,-1);
    \draw[thick, blue, ->] (0,1.0) .. controls ++(0,1.2) and ++(0,1.2) .. (2.25,1.0) to (2.25,-1);
    \node at (-.35,.75) {  $\lambda$};
    \node at (1.3,-.7) {\tiny $i$};
    \node at (2.05,-.7) {\tiny $j$};
    \node at (-.9,1.7) {\tiny $i$};
    \node at (-1.7,1.7) {\tiny $j$};
\end{tikzpicture}}
\\ \nn
& \qquad  =  \gamma
D_j D_{i}^{-1}
\frac{\sfc_{j,\l-\alpha_i}^{\prime +}}{\sfc_{j,\l}^{\prime +}}
\frac{\sfc_{i,\l}^{\prime +}}{\sfc_{i,\l-\alpha_j}^{\prime +}}
\frac{\sfc_{i,\l-\alpha_j}^{ +}}{\sfc_{i,\l}^{+}}
\frac{\sfc_{j,\l}^{ +}}{\sfc_{j,\l-\alpha_i}^{+}}
\hackcenter{\begin{tikzpicture}[scale=0.7]
    \draw[thick,blue, ->] (0,0) .. controls (0,.5) and (.75,.5) .. (.75,1.0);
    \draw[thick, blue,->] (.75,0) .. controls (.75,.5) and (0,.5) .. (0,1.0);
    \draw[thick, blue] (0,0) .. controls ++(0,-.4) and ++(0,-.4) .. (-.75,0) to (-.75,2);
    \draw[thick, blue] (.75,0) .. controls ++(0,-1.2) and ++(0,-1.2) .. (-1.5,0) to (-1.55,2);
    \draw[thick, blue, ->] (.75,1.0) .. controls ++(0,.4) and ++(0,.4) .. (1.5,1.0) to (1.5,-1);
    \draw[thick, blue, ->] (0,1.0) .. controls ++(0,1.2) and ++(0,1.2) .. (2.25,1.0) to (2.25,-1);
    \node at (-.35,.75) {  $\lambda$};
    \node at (1.3,-.7) {\tiny $i$};
    \node at (2.05,-.7) {\tiny $j$};
    \node at (-.9,1.7) {\tiny $i$};
    \node at (-1.7,1.7) {\tiny $j$};
\end{tikzpicture}}
\\
& \qquad  =  \gamma
D_j D_{i}^{-1}
t_{ji}^{\prime -1}t_{ij}' t_{ij}^{-1}t_{ji}\;
\hackcenter{\begin{tikzpicture}[scale=0.7]
    \draw[thick,blue, ->] (0,0) .. controls (0,.5) and (.75,.5) .. (.75,1.0);
    \draw[thick, blue,->] (.75,0) .. controls (.75,.5) and (0,.5) .. (0,1.0);
    \draw[thick, blue] (0,0) .. controls ++(0,-.4) and ++(0,-.4) .. (-.75,0) to (-.75,2);
    \draw[thick, blue] (.75,0) .. controls ++(0,-1.2) and ++(0,-1.2) .. (-1.5,0) to (-1.55,2);
    \draw[thick, blue, ->] (.75,1.0) .. controls ++(0,.4) and ++(0,.4) .. (1.5,1.0) to (1.5,-1);
    \draw[thick, blue, ->] (0,1.0) .. controls ++(0,1.2) and ++(0,1.2) .. (2.25,1.0) to (2.25,-1);
    \node at (-.35,.75) {  $\lambda$};
    \node at (1.3,-.7) {\tiny $i$};
    \node at (2.05,-.7) {\tiny $j$};
    \node at (-.9,1.7) {\tiny $i$};
    \node at (-1.7,1.7) {\tiny $j$};
\end{tikzpicture}}
\;\; = \;\;  \gamma
\hackcenter{\begin{tikzpicture}[scale=0.7]
    \draw[thick,blue, ->] (0,0) .. controls (0,.5) and (.75,.5) .. (.75,1.0);
    \draw[thick, blue,->] (.75,0) .. controls (.75,.5) and (0,.5) .. (0,1.0);
    \draw[thick, blue] (0,0) .. controls ++(0,-.4) and ++(0,-.4) .. (-.75,0) to (-.75,2);
    \draw[thick, blue] (.75,0) .. controls ++(0,-1.2) and ++(0,-1.2) .. (-1.5,0) to (-1.55,2);
    \draw[thick, blue, ->] (.75,1.0) .. controls ++(0,.4) and ++(0,.4) .. (1.5,1.0) to (1.5,-1);
    \draw[thick, blue, ->] (0,1.0) .. controls ++(0,1.2) and ++(0,1.2) .. (2.25,1.0) to (2.25,-1);
    \node at (-.35,.75) {  $\lambda$};
    \node at (1.3,-.7) {\tiny $i$};
    \node at (2.05,-.7) {\tiny $j$};
    \node at (-.9,1.7) {\tiny $i$};
    \node at (-1.7,1.7) {\tiny $j$};
\end{tikzpicture}} \label{eq:gimel-cross}
\\
& \qquad = \gamma \;\;
 \hackcenter{\begin{tikzpicture}[xscale=-1.0, scale=0.7]
    \draw[thick, blue, ->] (0,0) .. controls (0,.5) and (.75,.5) .. (.75,1.0);
    \draw[thick, blue, ->] (.75,0) .. controls (.75,.5) and (0,.5) .. (0,1.0);
    \draw[thick, blue] (0,0) .. controls ++(0,-.4) and ++(0,-.4) .. (-.75,0) to (-.75,2);
    \draw[thick, blue] (.75,0) .. controls ++(0,-1.2) and ++(0,-1.2) .. (-1.5,0) to (-1.55,2);
    \draw[thick, blue, ->] (.75,1.0) .. controls ++(0,.4) and ++(0,.4) .. (1.5,1.0) to (1.5,-1);
    \draw[thick, blue, ->] (0,1.0) .. controls ++(0,1.2) and ++(0,1.2) .. (2.25,1.0) to (2.25,-1);
    \node at (1.2,.75) {  $\lambda$};
    \node at (1.3,-.7) {\tiny $j$};
    \node at (2.05,-.7) {\tiny $i$};
    \node at (-.9,1.7) {\tiny $j$};
    \node at (-1.7,1.7) {\tiny $i$};
\end{tikzpicture}}
\;\; = \;\;
\gimel \left( \; \;
 \hackcenter{\begin{tikzpicture}[xscale=-1.0, scale=0.7]
    \draw[thick,  ->] (0,0) .. controls (0,.5) and (.75,.5) .. (.75,1.0);
    \draw[thick,  ->] (.75,0) .. controls (.75,.5) and (0,.5) .. (0,1.0);
    \draw[thick] (0,0) .. controls ++(0,-.4) and ++(0,-.4) .. (-.75,0) to (-.75,2);
    \draw[thick] (.75,0) .. controls ++(0,-1.2) and ++(0,-1.2) .. (-1.5,0) to (-1.55,2);
    \draw[thick,  ->] (.75,1.0) .. controls ++(0,.4) and ++(0,.4) .. (1.5,1.0) to (1.5,-1);
    \draw[thick,  ->] (0,1.0) .. controls ++(0,1.2) and ++(0,1.2) .. (2.25,1.0) to (2.25,-1);
    \node at (1.2,.75) {  $\lambda$};
    \node at (1.3,-.7) {\tiny $j$};
    \node at (2.05,-.7) {\tiny $i$};
    \node at (-.9,1.7) {\tiny $j$};
    \node at (-1.7,1.7) {\tiny $i$};
\end{tikzpicture}} \; \right)\nn
\end{align}
Where in \eqref{eq:gimel-cross} we used properties of the coefficients $Q$ and $Q'$ from Definition~\ref{eq:Q} and \eqref{eq:PiPj} when $(\alpha_i,\alpha_j)=-1$.

Similarly, if $\gamma$ represents the rescaling of an $ij$-labelled crossing then
\begin{align}
& \gimel \left(\;\;
 \hackcenter{\begin{tikzpicture}[scale=0.8]
    \draw[thick,<-] (0,0) .. controls ++(0,.5) and ++(0,-.5) .. (.75,1);
    \draw[thick] (.75,0) .. controls ++(0,.5) and ++(0,-.5) .. (0,1);
    \draw[thick, ->] (0,1 ) .. controls ++(0,.5) and ++(0,-.5) .. (.75,2);
    \draw[thick] (.75,1) .. controls ++(0,.5) and ++(0,-.5) .. (0,2);
        \node at (-.2,.15) {\tiny $i$};
    \node at (.95,.15) {\tiny $j$};
\node at (1.1,1.25) { $\lambda$};
\end{tikzpicture}} \;\; \right)
\;\; = \;\;
\gimel \left( \;\;
\hackcenter{\begin{tikzpicture}[xscale=-1.0, scale=0.7]
    \draw[thick, ->] (0,0) .. controls (0,.5) and (.75,.5) .. (.75,1.0);
    \draw[thick, ->] (.75,-.5) to (.75,0) .. controls (.75,.5) and (0,.5) .. (0,1.0) to (0,1.5);
    \draw[thick] (0,0) .. controls ++(0,-.4) and ++(0,-.4) .. (-.75,0) to (-.75,1.5);
    \draw[thick, ->] (.75,1.0) .. controls ++(0,.4) and ++(0,.4) .. (1.5,1.0) to (1.5,-.5);
    \node at (-1.15,.55) {  $\lambda$};
    \node at (1.75,-.2) {\tiny $i$};
    \node at (.55,-.2) {\tiny $j$};
    \node at (-.9,1.2) {\tiny $i$};
    \node at (.25,1.2) {\tiny $j$};
\node at (.56,2.5) {
    \begin{tikzpicture}[xscale=1, scale=0.7]
    \draw[thick, ->] (0,0) .. controls (0,.5) and (.75,.5) .. (.75,1.0);
    \draw[thick, ->] (.75,-.5) to (.75,0) .. controls (.75,.5) and (0,.5) .. (0,1.0) to (0,1.5);
    \draw[thick,<-] (0,0) .. controls ++(0,-.4) and ++(0,-.4) .. (-.75,0) to (-.75,1.5);
    \draw[thick] (.75,1.0) .. controls ++(0,.4) and ++(0,.4) .. (1.5,1.0) to (1.5,-.5);
    \node at (-.9,1.2) {\tiny $i$};
    \node at (.25,1.2) {\tiny $j$};
    \end{tikzpicture}
    };
\end{tikzpicture}}
\;\; \right)
\;\; = \;\;
\gamma \cdot  D_i^{-\l_i+1} \frac{\sfc_{i,\l}^{ +}}{\sfc_{i,\l}^{\prime +}}
\cdot
D_{i}^{\l_i-1+  (\alpha_i,\alpha_j)} \frac{\sfc_{i,\l+\alpha_j}^{\prime +}}{\sfc_{i,\l+\alpha_j}^{+}}
 \hackcenter{\begin{tikzpicture}[scale=0.8]
    \draw[thick, blue, <-] (0,0) .. controls ++(0,.5) and ++(0,-.5) .. (.75,1);
    \draw[thick, blue] (.75,0) .. controls ++(0,.5) and ++(0,-.5) .. (0,1);
    \draw[thick,blue, ->] (0,1 ) .. controls ++(0,.5) and ++(0,-.5) .. (.75,2);
    \draw[thick, blue ] (.75,1) .. controls ++(0,.5) and ++(0,-.5) .. (0,2);
        \node at (-.2,.15) {\tiny $i$};
    \node at (.95,.15) {\tiny $j$};
\node at (1.1,1.25) { $\lambda$};
\end{tikzpicture}} \nn
\\
& \quad \;\; = \;\;
\gamma \cdot
D_{i}^{(\alpha_i,\alpha_j)}
\frac{\sfc_{i,\l+\alpha_j}^{\prime +}}{\sfc_{i,\l}^{\prime +}}
\cdot
\frac{\sfc_{i,\l}^{ +}}{\sfc_{i,\l+\alpha_j}^{+}}
 \hackcenter{\begin{tikzpicture}[scale=0.8]
    \draw[thick, blue, <-] (0,0) .. controls ++(0,.5) and ++(0,-.5) .. (.75,1);
    \draw[thick, blue] (.75,0) .. controls ++(0,.5) and ++(0,-.5) .. (0,1);
    \draw[thick,blue, ->] (0,1 ) .. controls ++(0,.5) and ++(0,-.5) .. (.75,2);
    \draw[thick, blue ] (.75,1) .. controls ++(0,.5) and ++(0,-.5) .. (0,2);
        \node at (-.2,.15) {\tiny $i$};
    \node at (.95,.15) {\tiny $j$};
\node at (1.1,1.25) { $\lambda$};
\end{tikzpicture}}
 \label{XX1}
 \;\; \refequal{\eqref{eq:c-change-sl}} \;\;
\gamma \cdot
D_{i}^{(\alpha_i,\alpha_j)}
t_{ij}' t_{ij}^{-1}
 \hackcenter{\begin{tikzpicture}[scale=0.8]
    \draw[thick, blue, <-] (0,0) .. controls ++(0,.5) and ++(0,-.5) .. (.75,1);
    \draw[thick, blue] (.75,0) .. controls ++(0,.5) and ++(0,-.5) .. (0,1);
    \draw[thick,blue, ->] (0,1 ) .. controls ++(0,.5) and ++(0,-.5) .. (.75,2);
    \draw[thick, blue ] (.75,1) .. controls ++(0,.5) and ++(0,-.5) .. (0,2);
        \node at (-.2,.15) {\tiny $i$};
    \node at (.95,.15) {\tiny $j$};
\node at (1.1,1.25) { $\lambda$};
\end{tikzpicture}}
\end{align}
Using the specific value of $\gamma$ from \eqref{eq:gimel-KLR} that depends on the relationship of $i$ and $j$ in the $ij$-crossing, it follows that
\begin{equation} \label{XX2}
 \gamma \cdot
D_{i}^{(\alpha_i,\alpha_j)}
t_{ij}' t_{ij}^{-1}
=
\left\{
  \begin{array}{ll}
      t_{ii}'/ t_{ii} = \beta_{i}'/ \beta_{i}   & \hbox{if $i=j$, } \\
     1 & \hbox{otherwise,  }
  \end{array}
\right.
\end{equation}
where we have made use of \eqref{AA1} when $i <j$ and $i \cdot j =-1$.
This, and a similar computation with the other orientation, shows that the mixed relations are preserved.

For the $\mf{sl}_2$ relation we note that bubbles (real or fake) rescale as
\begin{align}
\gimel \left(\;\;
\hackcenter{ \begin{tikzpicture} [scale=.8]
 \draw  (-.75,1) arc (360:180:.45cm) [thick];
 \draw[<-](-.75,1) arc (0:180:.45cm) [thick];
     \filldraw  [black] (-1.55,.75) circle (2.5pt);
        \node at (-1.3,.3) { $\scriptstyle \ast +j$};
        \node at (-1.4,1.7) { $i $};
 \node at (-.2,1.5) { $\lambda $};
\end{tikzpicture}}
\;\; \right)
\;\; = \;\;
D_i^{j}   \frac{\sfc_{i,\l}^{ +}}{\sfc_{i,\l}^{\prime +}}
 \;\;
\hackcenter{ \begin{tikzpicture} [scale=.8, blue]
 \draw  (-.75,1) arc (360:180:.45cm) [thick];
 \draw[<-](-.75,1) arc (0:180:.45cm) [thick];
     \filldraw  [black] (-1.55,.75) circle (2.5pt);
        \node at (-1.3,.3) { $\scriptstyle \ast + j$};
        \node at (-1.4,1.7) { $i $};
 \node at (-.2,1.5) { $\lambda $};
\end{tikzpicture}}
\qquad \quad
\gimel \left(\;\;
\hackcenter{ \begin{tikzpicture} [scale=.8]
 \draw  (-.75,1) arc (360:180:.45cm) [thick];
 \draw[->](-.75,1) arc (0:180:.45cm) [thick];
     \filldraw  [black] (-1.55,.75) circle (2.5pt);
        \node at (-1.3,.3) { $\scriptstyle \ast +j$};
        \node at (-1.4,1.7) { $i $};
 \node at (-.2,1.5) { $\lambda $};
\end{tikzpicture}}
\;\; \right)
\;\; = \;\;
D_i^{j}   \frac{\sfc_{i,\l+\alpha_i}^{\prime +}}{\sfc_{i,\l+\alpha_i}^{+}}
 \;\;
\hackcenter{ \begin{tikzpicture} [scale=.8, blue]
 \draw  (-.75,1) arc (360:180:.45cm) [thick];
 \draw[->](-.75,1) arc (0:180:.45cm) [thick];
     \filldraw  [black] (-1.55,.75) circle (2.5pt);
        \node at (-1.3,.3) { $\scriptstyle \ast +j$};
        \node at (-1.4,1.7) { $i $};
 \node at (-.2,1.5) { $\lambda $};
\end{tikzpicture}}
\end{align}
Hence,
\begin{align}
& \gimel \left( \beta_{i,\l}
\sum_{\overset{f_1+f_2+f_3}{=\l_i-1}}\hackcenter{
 \begin{tikzpicture}[scale=0.8]
 \draw[thick,->] (0,-1.0) .. controls ++(0,.5) and ++ (0,.5) .. (.8,-1.0) node[pos=.75, shape=coordinate](DOT1){};
  \draw[thick,<-] (0,1.0) .. controls ++(0,-.5) and ++ (0,-.5) .. (.8,1.0) node[pos=.75, shape=coordinate](DOT3){};
 \draw[thick,->] (0,0) .. controls ++(0,-.45) and ++ (0,-.45) .. (.8,0)node[pos=.25, shape=coordinate](DOT2){};
 \draw[thick] (0,0) .. controls ++(0,.45) and ++ (0,.45) .. (.8,0);
 \draw (-.15,.7) node { $\scs i$};
\draw (1.05,0) node { $\scs i$};
\draw (-.15,-.7) node { $\scs i$};
 \node at (.95,.65) {\tiny $f_3$};
 \node at (-.55,-.05) {\tiny $\overset{-\l_i-1}{+f_2}$};
  \node at (.95,-.65) {\tiny $f_1$};
 \node at (1.5,.3) { $\lambda $};
 \filldraw[thick]  (DOT3) circle (2.5pt);
  \filldraw[thick]  (DOT2) circle (2.5pt);
  \filldraw[thick]  (DOT1) circle (2.5pt);
\end{tikzpicture} }  \; \right)
 \; = \;
\beta_{i,\l}
\sum_{\overset{f_1+f_2+f_3}{=\l_i-1}}
D_i^{f_1+f_3}
\cdot
 D_i^{f_2}   \frac{\sfc_{i,\l+\alpha_i}^{\prime +}} {\sfc_{i,\l+\alpha_i}^{+}}
\cdot
D_i^{-\l_i+1} \frac{\sfc_{i,\l}^{ +}}{\sfc_{i,\l}^{\prime +}}
\hackcenter{
 \begin{tikzpicture}[scale=0.8]
 \draw[thick,blue, ->] (0,-1.0) .. controls ++(0,.5) and ++ (0,.5) .. (.8,-1.0) node[pos=.75, shape=coordinate](DOT1){};
  \draw[thick,blue,<-] (0,1.0) .. controls ++(0,-.5) and ++ (0,-.5) .. (.8,1.0) node[pos=.75, shape=coordinate](DOT3){};
 \draw[thick,blue,->] (0,0) .. controls ++(0,-.45) and ++ (0,-.45) .. (.8,0)node[pos=.25, shape=coordinate](DOT2){};
 \draw[thick, blue] (0,0) .. controls ++(0,.45) and ++ (0,.45) .. (.8,0);
 \draw (-.15,.7) node { $\scs i$};
\draw (1.05,0) node { $\scs i$};
\draw (-.15,-.7) node { $\scs i$};
 \node at (.95,.65) {\tiny $f_3$};
 \node at (-.55,-.05) {\tiny $\overset{-\l_i-1}{+f_2}$};
  \node at (.95,-.65) {\tiny $f_1$};
 \node at (1.5,.3) { $\lambda $};
 \filldraw[thick, blue]  (DOT3) circle (2.5pt);
  \filldraw[thick, blue]  (DOT2) circle (2.5pt);
  \filldraw[thick, blue]  (DOT1) circle (2.5pt);
\end{tikzpicture} }
\nn
\\
& \quad  \; = \;
\beta_{i,\l}
\sum_{\overset{f_1+f_2+f_3}{=\l_i-1}}
 \frac{\sfc_{i,\l}^{ +}} {\sfc_{i,\l+\alpha_i}^{+}}
\cdot
 \frac{\sfc_{i,\l+\alpha_i}^{\prime +}}{\sfc_{i,\l}^{\prime +}}
\hackcenter{
 \begin{tikzpicture}[scale=0.8]
 \draw[thick,blue, ->] (0,-1.0) .. controls ++(0,.5) and ++ (0,.5) .. (.8,-1.0) node[pos=.75, shape=coordinate](DOT1){};
  \draw[thick,blue,<-] (0,1.0) .. controls ++(0,-.5) and ++ (0,-.5) .. (.8,1.0) node[pos=.75, shape=coordinate](DOT3){};
 \draw[thick,blue,->] (0,0) .. controls ++(0,-.45) and ++ (0,-.45) .. (.8,0)node[pos=.25, shape=coordinate](DOT2){};
 \draw[thick, blue] (0,0) .. controls ++(0,.45) and ++ (0,.45) .. (.8,0);
 \draw (-.15,.7) node { $\scs i$};
\draw (1.05,0) node { $\scs i$};
\draw (-.15,-.7) node { $\scs i$};
 \node at (.95,.65) {\tiny $f_3$};
 \node at (-.55,-.05) {\tiny $\overset{-\l_i-1}{+f_2}$};
  \node at (.95,-.65) {\tiny $f_1$};
 \node at (1.5,.3) { $\lambda $};
 \filldraw[thick, blue]  (DOT3) circle (2.5pt);
  \filldraw[thick, blue]  (DOT2) circle (2.5pt);
  \filldraw[thick, blue]  (DOT1) circle (2.5pt);
\end{tikzpicture} }
\; \refequal{\eqref{eq:c-change-sl}} \;
\beta_{i,\l}
\sum_{\overset{f_1+f_2+f_3}{=\l_i-1}}
 t_{ii}^{-1} t_{ii}'
\hackcenter{
 \begin{tikzpicture}[scale=0.8]
 \draw[thick,blue, ->] (0,-1.0) .. controls ++(0,.5) and ++ (0,.5) .. (.8,-1.0) node[pos=.75, shape=coordinate](DOT1){};
  \draw[thick,blue,<-] (0,1.0) .. controls ++(0,-.5) and ++ (0,-.5) .. (.8,1.0) node[pos=.75, shape=coordinate](DOT3){};
 \draw[thick,blue,->] (0,0) .. controls ++(0,-.45) and ++ (0,-.45) .. (.8,0)node[pos=.25, shape=coordinate](DOT2){};
 \draw[thick, blue] (0,0) .. controls ++(0,.45) and ++ (0,.45) .. (.8,0);
 \draw (-.15,.7) node { $\scs i$};
\draw (1.05,0) node { $\scs i$};
\draw (-.15,-.7) node { $\scs i$};
 \node at (.95,.65) {\tiny $f_3$};
 \node at (-.55,-.05) {\tiny $\overset{-\l_i-1}{+f_2}$};
  \node at (.95,-.65) {\tiny $f_1$};
 \node at (1.5,.3) { $\lambda $};
 \filldraw[thick, blue]  (DOT3) circle (2.5pt);
  \filldraw[thick, blue]  (DOT2) circle (2.5pt);
  \filldraw[thick, blue]  (DOT1) circle (2.5pt);
\end{tikzpicture} }
\nn
\\
& \quad  \; = \;
\beta'_{i,\l}
\sum_{\overset{f_1+f_2+f_3}{=\l_i-1}}
\hackcenter{
 \begin{tikzpicture}[scale=0.8]
 \draw[thick,blue, ->] (0,-1.0) .. controls ++(0,.5) and ++ (0,.5) .. (.8,-1.0) node[pos=.75, shape=coordinate](DOT1){};
  \draw[thick,blue,<-] (0,1.0) .. controls ++(0,-.5) and ++ (0,-.5) .. (.8,1.0) node[pos=.75, shape=coordinate](DOT3){};
 \draw[thick,blue,->] (0,0) .. controls ++(0,-.45) and ++ (0,-.45) .. (.8,0)node[pos=.25, shape=coordinate](DOT2){};
 \draw[thick, blue] (0,0) .. controls ++(0,.45) and ++ (0,.45) .. (.8,0);
 \draw (-.15,.7) node { $\scs i$};
\draw (1.05,0) node { $\scs i$};
\draw (-.15,-.7) node { $\scs i$};
 \node at (.95,.65) {\tiny $f_3$};
 \node at (-.55,-.05) {\tiny $\overset{-\l_i-1}{+f_2}$};
  \node at (.95,-.65) {\tiny $f_1$};
 \node at (1.5,.3) { $\lambda $};
 \filldraw[thick, blue]  (DOT3) circle (2.5pt);
  \filldraw[thick, blue]  (DOT2) circle (2.5pt);
  \filldraw[thick, blue]  (DOT1) circle (2.5pt);
\end{tikzpicture} }
\end{align}
This computation together with \eqref{XX1} and \eqref{XX2} prove this $\mf{sl}_2$-relation.  The other is proven similarly.
\end{proof}

%


%
%

\end{document}